\newtheorem{Theorem}{Theorem}[section]
\newtheorem{Proposition}[Theorem]{Proposition}
\newtheorem{Lemma}[Theorem]{Lemma}
\newtheorem{Corollary}[Theorem]{Corollary}
\newtheorem{Remark}[Theorem]{Remark}
\newtheorem{Definition}[Theorem]{Definition}
\newcommand{\bTheorem}[1]{
\begin{Theorem} \label{T#1} }
\newcommand{\eT}{\end{Theorem}}
\newcommand{\bProposition}[1]{
\begin{Proposition} \label{P#1}}
\newcommand{\eP}{\end{Proposition}}
\newcommand{\bLemma}[1]{
\begin{Lemma} \label{L#1} }
\newcommand{\eL}{\end{Lemma}}
\newcommand{\bCorollary}[1]{
\begin{Corollary} \label{C#1} }
\newcommand{\eC}{\end{Corollary}}
\newcommand{\bRemark}[1]{
\begin{Remark} \label{R#1} }
\newcommand{\eR}{\end{Remark}}
\newcommand{\bDefinition}[1]{
\begin{Definition} \label{D#1} }
\newcommand{\eD}{\end{Definition}}
\newcommand{\bFormula}[1]{
\begin{equation} \label{#1}}
\newcommand{\eF}{\end{equation}}
\newcommand{\Ov}[1]{\overline{#1}}
\newcommand{\vr}{\varrho}
\newcommand{\vre}{\vr_\ep}
\newcommand{\vue}{\vu_\ep}
\newcommand{\vu}{\vc{u}}
\newcommand{\vc}[1]{{\bf #1}}
\newcommand{\bfU}{\tilde \vu}
\newcommand{\Div}{{\rm div}_x}
\newcommand{\Grad}{\nabla_x}
\newcommand{\tn}[1]{\mathbb{ #1 }}
\newcommand{\dx}{{\rm d} {x}}
\newcommand{\dt}{{\rm d} t }
\newcommand{\intTor}[1]{\int_{\tor} #1 \ \dx}
\newcommand{\ep}{\varepsilon}
\newcommand{\tor}{\mathcal{T}^3}
\font\F=msbm10 scaled 1000
\newcommand{\R}{\mbox{\F R}}
\definecolor{Cgrey}{rgb}{0.85,0.85,0.85}
\definecolor{Cblue}{rgb}{0.50,0.85,0.85}
\definecolor{Cred}{rgb}{1,0,0}
\definecolor{fancy}{rgb}{0.10,0.85,0.10}
\newcommand\Cbox[2]{%
    \newbox\contentbox%
    \newbox\bkgdbox%
    \setbox\contentbox\hbox to \hsize{%
        \vtop{
            \kern\columnsep
            \hbox to \hsize{%
                \kern\columnsep%
                \advance\hsize by -2\columnsep%
                \setlength{\textwidth}{\hsize}%
                \vbox{
                    \parskip=\baselineskip
                    \parindent=0bp
                    #2
                }%
                \kern\columnsep%
            }%
            \kern\columnsep%
        }%
    }%
    \setbox\bkgdbox\vbox{
        \color{#1}
        \hrule width  \wd\contentbox %
               height \ht\contentbox %
               depth  \dp\contentbox
        \color{black}
    }%
    \wd\bkgdbox=0bp%
    \vbox{\hbox to \hsize{\box\bkgdbox\box\contentbox}}%
    \vskip\baselineskip%
}
\date{}
\newcommand{\expe}[1]{ \mathbb{E} \left[ #1 \right] }
\newcommand{\Dif}{{\rm d}}
\newcommand{\DD}{D^d_t}
\newcommand{\DS}{\mathbb{D}^s_t}
\newcommand{\Psp}{(\Omega, \mathfrak{F}, \mathbb{P})}
\newcommand{\StoB}{\left(\Omega, \mathfrak{F},\left\{\mathfrak{F}_t \right\}_{t \geq 0},  \mathbb{P}\right)}
\newcommand{\N}{\mathbb N}
\newcommand{\E}{\mathbb E}
\newcommand{\p}{\mathbb P}
\newcommand{\dd}{\mathrm{d}}
\newcommand{\bfu}{\mathbf u}
\newcommand{\bfv}{\mathbf v}
\newcommand{\bfq}{\mathbf q}
\newcommand{\bq}{\mathbf q}
\newcommand{\bfQ}{\mathbf Q}
\newcommand{\bfw}{\mathbf w}
\newcommand{\dif}{\mathrm{d}}
\newcommand{\mf}{\mathfrak{F}}
\newcommand{\mr}{{R}}
\newcommand{\prst}{\mathbb{P}}
\newcommand{\mt}{\mathcal{T}^3}
\begin{document}


\title{Compressible fluids driven by stochastic forcing: The relative energy inequality and applications}

\author{Dominic Breit \and Eduard Feireisl \and \and Martina Hofmanov\' a}

\maketitle

\centerline{Department of Mathematics, Heriot-Watt University}

\centerline{Riccarton Edinburgh EH14 4AS, UK}

\bigskip

\centerline{Institute of Mathematics of the Academy of Sciences of the Czech Republic}

\centerline{\v Zitn\' a 25, CZ-115 67 Praha 1, Czech Republic}

\bigskip

\centerline{Technical University Berlin, Institute of Mathematics}

\centerline{Stra\ss e des 17. Juni 136, 10623 Berlin, Germany}

\bigskip






\maketitle

\bigskip





\begin{abstract}

We show the relative energy inequality for the compressible Navier-Stokes system driven by a stochastic forcing. As a corollary, we prove the weak-strong
uniqueness property (pathwise and in law) and convergence of weak solutions in the inviscid-incompressible limit. In particular, we establish a Yamada--Watanabe type result in the context of the compressible Navier-Stokes system, that is, pathwise weak--strong uniqueness implies weak--strong uniqueness in law.

\end{abstract}

{\bf Key words:} {Compressible fluid, stochastic Navier-Stokes system, relative entropy/energy, weak-strong uniqueness, inviscid-incompressible limit}


\section{Introduction}
\label{i}

The concept of \emph{weak solution} was introduced in
the mathematical fluid mechanics to handle the unsurmountable difficulties related to the hypothetical or effective possibility of
singularities experienced by solutions of the corresponding systems of partial differential equations. However, as
shown in the seminal work of DeLellis and Sz\' ekelyhidi \cite{DelSze3}, the sofar well accepted criteria derived from the
underlying physical principles as the Second law of thermodynamics are not sufficient to guarantee the expected well-posedness of the
associated initial and/or boundary value problems in the class of weak solutions.
The approach based on \emph{relative entropy/energy} introduced by Dafermos \cite{Daf4} has become an important
and rather versatile tool whenever a weak solution is expected to be, or at least to approach, a smooth one,
see Leger, Vasseur \cite{LegVas}, Masmoudi \cite{MaS5}, Saint-Raymond \cite{SaRay} for various applications. In particular,
the problem of weak-strong uniqueness for the compressible Navier-Stokes and the Navier-Stokes-Fourier system
were addressed by Germain \cite{Ger} and finally solved in \cite{FeiNov10}, \cite{FENOSU}.

All the aforementioned results apply to the {deterministic models}. Our goal is to adapt the
concept of relative energy/entropy \color{black} to the stochastic setting. As a model example,
we consider the Navier-Stokes system describing the motion of a compressible viscous fluid driven by stochastic forcing:

\Cbox{Cgrey}{
\begin{eqnarray}
  \Dif \vr + \Div (\vr \vu) \ \dt &=& 0, \label{eq1} \\
  \label{eq2} \Dif (\vr \vu) + \left[ \Div (\vr \vu \otimes \vu) + \Grad p(\vr) \right] \Dif t &=& \Div \mathbb{S}(\Grad \vu) \ \dt + \mathbb{G}(\vr, \vr \vu) \Dif W, \\
  \tn{S} (\Grad \vu) &=& \mu \Big(\Grad \vu + \Grad^t \vu -  \frac{2}{3} \Div \vu \mathbb{I} \Big) + \eta \Div \vu \tn{I},\label{eq3}
\end{eqnarray}
}

\noindent where $p = p(\vr)$ is the pressure, $\mu > 0$, $\eta \geq 0$ the viscosity coefficients, and the
driving force is represented
by a cylindrical Wiener process $W$ in a separable Hilbert space $\mathfrak{U}$ defined on some probability space $\Psp$. We assume that $W$ is formally given by the expansion
$$W(t)=\sum_{k\geq 1} e_k W_k(t),$$
where $\{ W_k \}_{k \geq 1}$ is a family a family of mutually independent real-valued Brownian motions and $\{e_k\}_{k\geq 1}$ is an orthonormal basis of $\mathfrak{U}$. We assume that $\mathbb{G}(\vr, \vr\vu) $ belongs to the class of Hilbert-Schmidt operators $L_2(\mathfrak U;L^2(\mathcal T^3))$ a.e. in $(\omega,t)$. The precise description will be given in Section \ref{M}. The stochastic forcing then takes the form
\[
\mathbb{G}(\vr, \vr\vu) \Dif W = \sum_{k \geq 1} \vc{G}_k (\vr, \vr\vu) \,\dd W_k.
\]
Our main goal is to derive a relative energy inequality for system (\ref{eq1}--\ref{eq3}) analogous to that obtained in the deterministic case in
\cite{FENOSU}. For the sake of simplicity, we focus on the space-periodic boundary conditions yielding the physical space
in the form of the ``flat'' torus
\[
\mathcal{T}^N = \Big([-1,1]\Big|_{\{-1,1\}} \Big)^N.
\]
Moreover, we restrict ourselves to the physically relevant case $N=3$ seeing that our arguments can be easily adapted for $N=1,2$.

We proceed in several steps:

\begin{itemize}

\item
Revisiting the existence proof in \cite{BrHo} we derive a weak differential form of
the \emph{energy inequality} associated to system (\ref{eq1}--\ref{eq3}):
\begin{equation} \label{EI2}
\begin{split}
&- \int_0^T \partial_t \psi
\bigg( \intTor{ \Big[ \frac{1}{2} \varrho | {\bf u} |^2 + H(\varrho) \Big] } \bigg) \ \dt
+ \int_0^T \psi \intTor{  \mathbb{S} (\nabla {\bf u}): \nabla {\bf u} } \ {\rm d}t \\
\leq &\psi(0) \intTor{ \Big[ \frac{| (\vr \vu)(0, \cdot) |^2 }{2 \varrho(0, \cdot)}  + H(\varrho(0, \cdot)) \Big] }
+ \frac{1}{2} \int_0^T
\psi \bigg(
\intTor{ \sum_{k \geq 1} \frac{ | {\bf G}_k (\varrho, \varrho {\bf u}) |^2 }{\varrho} } \bigg) {\rm d}t
\\ &+ \int_0^T  \psi {\rm d}M_E
\end{split}
\end{equation}
holds true $\mathbb{P}$-a.s. for any deterministic smooth test function $\psi \geq 0$, $\psi(T) = 0$, where
\[
H(\vr) = \vr \int_0^\varrho \frac{p(z)}{z^2} \ {\rm d}z
\]
is the pressure potential, and
$M_E$ is a real-valued martingale satisfying
\[
\E\bigg[ \sup_{t \in [0,T]} |M_E|^p \bigg] \leq c(p) \bigg( 1 + \expe{\intTor{ \left( \frac{|(\vr \vu)(0, \cdot)|^2 }{2 \vr(0, \cdot)} +  H(\vr(0, \cdot)) \bigg) } }^p
\right)
\]
for any $1 \leq p < \infty$,
see Section \ref{REI}.
\item
We introduce \color{black}
the \emph{relative energy} functional
\begin{equation} \label{eq:entrdef}
\mathcal{E} \left( \vr, \vu \Big| r , {\bf U} \right) = \intTor{ \Big[ \frac{1}{2} \varrho | \vu - {\bf U} |^2 + H(\varrho) -
H'(r) (\varrho - r) - H(r) \Big] },
\end{equation}
that may be viewed as a kind of distance between
a weak martingale solution $[\vr, \vu]$ of system (\ref{eq1}--\ref{eq3}) and a pair of arbitrary (smooth)
processes \color{black} $[r, {\bf U}]$.
In view of future applications, it is convenient that the behavior of the test functions $[r, {\bf U}]$ mimicks that of $[\vr, \vu]$. Accordingly,
we require $r$ and $\vc{U}$ to be stochastic processes adapted to $\{\mathfrak{F}_t\}$:
\begin{equation} \label{difer}
\Dif r = \DD r \,\dt + \DS r \,\Dif W, \qquad \Dif \vc{U} = \DD \vc{U} \,\dt + \DS \vc{U} \,\Dif W.
\end{equation}
We assume that $\DD r,\DD \vc{U}$ are functions of $(\omega,t,x)$ and that $\DS r,\DS\vc{U}$ belong to $L_2(\mathfrak U;L^2(\mathcal T^3))$ a.e. in $(\omega,t)$. Both with appropriate integrability and pace-regularity.
Under these circumstances, the \emph{relative energy inequality} reads:

\Cbox{Cgrey}{

\begin{eqnarray}
\label{REI2}
- \int_0^T \partial_t \psi \ \mathcal{E} \left( \vr , \vu \Big| r , \vc{U} \right)\ \dt   &+&
\int_0^T \psi \intTor{  \left( \mathbb{S} (\nabla {\bf u}) - \tn{S}(\Grad \vc{U} \right): (\Grad {\bf u} -
\Grad \vc{U} )} \ {\rm d}t \\
\nonumber
&\leq&  \psi(0) \mathcal{E} \left( \vr, \vu \ \Big| r, \vc{U} \right) (0) + \int_0^T  \psi {\rm d}M_{RE}
+ \int_0^T
\psi \mathcal{R}\left( \vr , \vu \Big| r , \vc{U} \right)  {\rm d}t,
\end{eqnarray}
for any $\psi$ belonging to the same class as in (\ref{EI2}). Here, similarly to (\ref{EI2}), $M_{RE}$ is a real-valued square integrable martingale.

}

\noindent
The remained term is
\begin{align}
\mathcal{R}  \left( \varrho, \vu \Big| r , {\bf U} \right) &=\intTor{ \tn{S}(\Grad \vc{U}):(\Grad \vc{U}-\Grad \vu)} +\intTor{ \varrho\Big(\DD \vc{U}+\vu \cdot\Grad \vc{U} \Big)(\vc{U} -\vu) }\nonumber\\
&+\intTor{ \big((r-\varrho)H''(r) \DD r +\Grad H'(r)(r \vc{U} -\varrho\vu)\big)} -\intTor{ \Div \vc{U} (p(\varrho)-p(r))} \nonumber\\
&+\frac{1}{2}\sum_{k\geq1}\,\intTor{ \varrho \Big|\frac{\vc{G}_k(\varrho,\varrho \vu)}{\varrho}  - \DS {\bf U}(e_k) \Big|^2 }\nonumber\\
&+ \frac{1}{2} \sum_{k\geq1} \intTor{ \vr H'''(r) |  \DS r(e_k)  |^2 }  + \frac{1}{2}
\sum_{k\geq1}\intTor{ p''(r) |  \DS r (e_k) |^2 } .\label{rem}
\end{align}

The relative energy inequality is proved in Section \ref{REI}. The main ingredients of the proof are the energy inequality (\ref{EI2}) and a careful
application of It\^{o}'s stochastic calculus.

\item As a corollary of the relative energy inequality we present two applications:
The weak-strong uniqueness property (pathwise and in law) for the stochastic Navier-Stokes system (\ref{eq1}--\ref{eq3}) in Section \ref{WS}, and
the singular incompressible-inviscid limit in Section \ref{II}. In particular, we establish a Yamada--Watanabe type result that says, roughly speaking, that pathwise weak--strong uniqueness implies weak--strong uniqueness in law, see Theorem \ref{thm:uniqlaw}.

\end{itemize}

\begin{Remark} \label{first}

A weak martingale solution satisfying
the energy inequality in the ``differential form'' \eqref{EI2} may be seen as an analogue of the \emph{a.s. super--martingale}
solution introduced by Flandoli and Romito \cite{FlaRom} and further developed by Debussche and Romito \cite{DebRom} in the context of the
incompressible Navier-Stokes system.

It follows from \eqref{EI2} that the limits
\[
{\rm ess}\lim_{\tau \to s+} \intTor{ \Big[ \frac{1}{2} \vr |\vu|^2 + H(\vr) \Big] (\tau) }, \
{\rm ess}\lim_{\tau \to t-} \intTor{ \Big[ \frac{1}{2} \vr |\vu|^2 + H(\vr) \Big] (\tau) }
\]
exist $\mathbb{P}-$a.s. for a.a. $0 \leq s \leq t \leq T$ including $s=0$,
\[
\lim_{\tau \to 0+} \intTor{ \Big[ \frac{1}{2} \vr |\vu|^2 + H(\vr) \Big] (\tau) } =
\intTor{ \Big[ \frac{1}{2} \vr |\vu|^2 + H(\vr) \Big] (0) },
\]
and
\begin{equation} \label{EI2+}
\begin{split}
\left[ \intTor{ \left[ \frac{1}{2} \vr |\vu|^2 + H(\vr) \right] (\tau) } \right]_{\tau = s}^{\tau = t}
&+ \int_{s}^t \intTor{ \mathbb{S}(\Grad \vu) : \Grad \vu } \ \dt
\\
\leq \frac{1}{2} \int_s^t \intTor{ \sum_{k \geq 1} \frac{ \left| \vc{G}_k (\vr, \vr \vu) \right|^2 }{\vr} } \ \dt
&+ M_E(t) - M_E(s)\ \mbox{$\mathbb{P}$-a.s.}
\end{split}
\end{equation}
Finally, in view of the weak lower-semicontinuity of convex functionals,
\[
\liminf_{\tau \to t-} \intTor{ \left[ \frac{1}{2} \vr |\vu|^2 + H(\vr) \right] (\tau) }
\geq \intTor{ \left[ \frac{1}{2} \vr |\vu|^2 + H(\vr) \right] (t) } \ \mbox{for any}\ t \in [0,T)\ \mbox{$\mathbb{P}$-a.s.}
\]
Similar observations hold for the relative energy inequality (\ref{REI2}) that can be rewritten as
\begin{equation}
\label{REI2+}
\begin{split}
\mathcal{E} \left( \vr , \vu \Big| r , \vc{U} \right)(t)   &+
\int_s^t \intTor{  \left( \mathbb{S} (\nabla {\bf u}) - \tn{S}(\Grad \vc{U} \right): (\Grad {\bf u} -
\Grad \vc{U} )} \ {\rm d}r \\
&\leq  \mathcal{E} \left( \vr , \vu \Big| r , \vc{U} \right)(s) + M_{RE}(t) - M_{RE}(s)
+ \int_s^t
\mathcal{R}\left( \vr , \vu \Big| r , \vc{U} \right)  {\rm d}r,
\end{split}
\end{equation}
for any $0 \leq t \leq T$, a.a. $0 \leq s \leq t$ including $s=0$ $\mathbb{P}$-a.s.

\end{Remark}

\color{black}

\section{Mathematical framework and main results}
\label{M}

Throughout the whole text, we suppose that the pressure $p = p(\vr)$ belongs to the class $p \in C^1[0, \infty) \cap C^3(0, \infty)$ and
satisfies
\begin{equation}
\label{press}
p(0) = 0, \ p'(\varrho) > 0 \ \mbox{if}\ \vr > 0, \ \lim_{\varrho \to \infty} \frac{p'(\vr)}{\vr^{\gamma - 1}} = p_\infty > 0,\
\gamma > \frac{3}{2}.
\end{equation}

Next we specify the stochastic forcing term.
Let $(\Omega,\mf,(\mf_t)_{t\geq0},\prst)$ be a stochastic basis with a complete, right-continuous filtration. The process $W$ is a cylindrical Wiener process, that is,
$$W(t)=\sum_{k\geq 1} e_k W_k(t),$$
where $\{ W_k \}_{k \geq 1}$ is a family a family of mutually independent real-valued Brownian motions and $\{e_k\}_{k\geq 1}$ is an orthonormal basis of $\mathfrak{U}$
To give the precise definition of the diffusion coefficient $\mathbb{G}$, consider $\rho\in L^\gamma(\mt)$, $\rho\geq0$, and $\bfv\in L^2(\mt)$ such that $\sqrt\rho\bfv\in L^2(\mt)$. We recall that we assume $\gamma>\frac{3}{2}$.
Denote $\bfq=\rho\bfv$ and let $\,\mathbb{G}(\rho,\bq):\mathfrak{U}\rightarrow L^1(\mt)$ be defined as follows
$$\mathbb{G}(\rho,\bq)e_k=\mathbf{G}_k(\cdot,\rho(\cdot),\bq(\cdot)).$$
The coefficients $\mathbf{G}_{k}:\mt\times\mr\times\mr^3\rightarrow\mr^3$ are $C^1$-functions that satisfy uniformly in $x\in\mt$
\begin{align}
\vc{G}_k (\cdot, 0 , 0) &= 0 \label{FG1}\\
| \partial_\vr \vc{G}_k | + |\nabla_{\vc{q}} \vc{G}_k | &\leq \alpha_k, \quad \sum_{k \geq 1} \alpha_k  < \infty.
\label{FG2}
\end{align}
As in \cite{BrHo}, we understand the stochastic integral as a process in the Hilbert space $W^{-\lambda,2}(\mt)$, $\lambda>3/2$. Indeed, it can be checked that under the above assumptions on $\rho$ and $\bfv$, the mapping $\mathbb{G}(\rho,\rho\bfv)$ belongs to $L_2(\mathfrak{U};W^{-\lambda,2}(\mt))$, the space of Hilbert-Schmidt operators from $\mathfrak{U}$ to $W^{-b,2}(\mt)$.
Consequently, if\footnote{Here $\mathcal{P}$ denotes the predictable $\sigma$-algebra associated to $(\mf_t)$.}
\begin{align*}
\rho&\in L^\gamma(\Omega\times(0,T),\mathcal{P},\dif\prst\otimes\dif t;L^\gamma(\mt)),\\
\sqrt\rho\bfv&\in L^2(\Omega\times(0,T),\mathcal{P},\dif\prst\otimes\dif t;L^2(\mt)),
\end{align*}
and the mean value $(\rho(t))_{\mt}$ is essentially bounded
then the stochastic integral
\[
\int_0^t \mathbb{G}(\vr, \vr \vu) \ {\rm d} W = \sum_{k \geq 1}\int_0^t \vc{G}_k (\cdot, \vr, \vr \vu) \ {\rm d} W_k
\]
 is a well-defined $(\mf_t)$-martingale taking values in $W^{-\lambda,2}(\mt)$. Note that the continuity equation \eqref{eq1} implies that the mean value $(\varrho(t))_{\mt}$ of the density $\varrho$ is constant in time (but in general depends on $\omega$).
Finally, we define the auxiliary space $\mathfrak{U}_0\supset\mathfrak{U}$ via
$$\mathfrak{U}_0=\bigg\{v=\sum_{k\geq1}\alpha_k e_k;\;\sum_{k\geq1}\frac{\alpha_k^2}{k^2}<\infty\bigg\},$$
endowed with the norm
$$\|v\|^2_{\mathfrak{U}_0}=\sum_{k\geq1}\frac{\alpha_k^2}{k^2},\quad v=\sum_{k\geq1}\alpha_k e_k.$$
Note that the embedding $\mathfrak{U}\hookrightarrow\mathfrak{U}_0$ is Hilbert-Schmidt. Moreover, trajectories of $W$ are $\prst$-a.s. in $C([0,T];\mathfrak{U}_0)$.

%

\subsection{Weak martingale solutions}

The existence of (finite energy) \emph{weak martingale solutions} to the stochastic Navier-Stokes system (\ref{eq1}--\ref{eq3})
was recently established in \cite{BrHo}. We point out that the stochastic basis as well as the Wiener process is an integral part of the martingale solution.
In particular, a martingale solution attains the prescribed initial data
only in law, specifically, if $\Lambda$ is a Borel probability measure on the space $L^\gamma (\tor) \times L^{\frac{2 \gamma}{\gamma + 1}}(\tor;R^3)$ then we may require that
\begin{equation} \label{init}
\mathbb{P}\circ \left( \vr(0), \vr \vu (0) \right)^{-1} = \Lambda.
\end{equation}

Denote $\left< \cdot, \cdot \right>$ the standard duality product between $W^{\lambda,2}(\tor)$, $W^{-\lambda,2}(\tor)$ that coincides with
the $L^2$ scalar product for $\lambda = 0$. Let us recall the definition of a weak martingale solution.

\begin{Definition}
\label{DM1} A quantity
\[
\left[ \StoB ; \vr, \vu, W \right]
\]
is called a weak martingale solution to problem (\ref{eq1}--\ref{eq3}) with the initial law $\Lambda$ provided:
\begin{itemize}
\item
$\StoB$ is a stochastic basis with a complete right-continuous filtration;
\item $W$ is an $\{ \mathfrak{F}_t \}_{t \geq 0}$-cylindrical Wiener process;
\item the density $\vr$ satisfies $\vr \geq 0$, $t \mapsto \left< \vr(t, \cdot), \psi \right> \in C[0,T]$ for any
$\psi \in C^\infty(\tor)$
$\mathbb{P}-$a.s., the function $t \mapsto \left< \vr(t, \cdot), \psi \right>$
is progressively measurable,
and
\[
\E\bigg[\sup_{t \in [0,T]} \| \vr(t,\cdot) \|^p_{L^\gamma(\tor)} \bigg] < \infty \ \mbox{for all}\ 1 \leq p < \infty;
\]
\item the velocity field $\vu$ is adapted, $\vu \in L^2(\Omega \times (0,T); W^{1,2}(\tor;R^3))$,
\[
\E\bigg[\bigg( \int_0^T \| \vu \|^2_{W^{1,2}(\tor; R^3)} \ \dt \bigg)^p \bigg] < \infty\ \mbox{for all}\ 1 \leq p < \infty;
\]
\item the momentum $\vr \vu$ satisfies $t \mapsto \left< \vr \vu, \phi \right> \in C[0,T]$ for any $\phi \in C^\infty(\tor;R^3)$
$\mathbb{P}-$a.s., the function $t \mapsto \left< \vr \vu, \phi \right>$ is progressively measurable,
\[
\expe{ \sup_{t \in [0,T]} \left\| \vr \vu \right\|^p_{L^{\frac{2 \gamma}{\gamma + 1}}} } < \infty\ \mbox{for all}\  1 \leq p < \infty;
\]
\item $\Lambda=\mathbb{P}\circ \left( \vr(0), \vr \vu (0) \right)^{-1} $,
\item for all test functions $\psi \in C^\infty(\tor)$, $\phi \in C^\infty(\tor; R^3)$ and all $t \in [0,T]$ it holds $\mathbb{P}$-a.s.:
\begin{eqnarray}
\nonumber
\Dif \left< \vr, \psi \right> &=& \left< \vr \vu , \Grad \psi \right> \ \dt,
\\
\nonumber \Dif \left< \vr \vu, \phi \right>  &=& \Big[ \left< \vr \vu \otimes \vu , \Grad \phi \right>  - \left< \tn{S}(\Grad \vu), \Grad \phi \right>
+ \left< p(\vr), \Div \phi \right> \Big] \dt
+ \left< \mathbb{G} (\vr, \vr \vu) , \phi \right> \Dif W;
\end{eqnarray}

\end{itemize}

\end{Definition}

\color{black}

The following existence result was proved in \cite{BrHo}:

\begin{Theorem} \label{thm:exist}
Let the pressure $p$ be as in (\ref{press}) and let $\vc{G}_k$ be continuously differentiable satisfying (\ref{FG1}), (\ref{FG2}).
Let the initial law $\Lambda$ be given on the space $L^\gamma (\tor) \times L^{\frac{2 \gamma}{\gamma + 1}}(\tor;R^3)$ and
\begin{eqnarray}
\nonumber
\Lambda \Big\{ (\vr, \vc{q} ) \in L^\gamma (\tor) & \times & L^{\frac{2 \gamma}{\gamma + 1}}(\tor;R^3),\ \vr \geq 0,  \\
\nonumber  0 < M_1 \leq \intTor{ \vr } \leq M_2,\ \vc{q} &=& 0 \ \mbox{a.e. on the set} \ \{ \vr = 0\} \Big\} = 1 ,
\end{eqnarray}
for certain constants $0<M_1<M_2$,
\[
\int_{ L^\gamma \times L^{2\gamma/(\gamma + 1)}} \left\| \frac{1}{2} \frac{ |\vc{q}|^2}{\vr}  + H(\vr) \right\|_{L^1(\tor)}^p
\ {\rm d} \Lambda (\vr, \vc{q} ) \leq c(p) < \infty
\]
for any $1 \leq p < \infty$.

Then the Navier-Stokes system (\ref{eq1}--\ref{eq3}) possesses at least one weak martingale solution with the initial law
(\ref{init}).
In addition, the equation of continuity (\ref{eq1}) holds also in the renormalized sense
\[
\Dif \left< b(\vr), \psi \right> = \left< b(\vr) \vu, \Grad \psi \right> \ {\rm d}t - \left<  \left( b(\vr) - b'(\vr) \vr \right) \Div \vu,
\psi \right> \ \dt
\]
for any test function $\psi \in C^\infty(\tor)$, and any $b \in C^1[0,\infty)$, $b'(\vr) = $ for $\vr \geq \vr_g$.
Moreover, the energy estimates
\begin{eqnarray}
\label{energy}
\E\bigg[ \sup_{t \in [0,T]} \bigg( \intTor{ \Big[ \frac{|\vr \vu|^2 }{2 \vr} + H(\vr) \Big] } \bigg)^p \bigg]
  &+& \E\bigg[ \bigg( \int_0^T \intTor{ \tn{S}(\Grad \vu) : \Grad \vu } \ \dt \bigg)^p \bigg] \\
\nonumber &\leq&
c(p) \,\E\bigg[ \bigg( \intTor{ \Big[ \frac{|(\vr \vu) (0, \cdot)|^2 }{2 \vr(0, \cdot)} +  H(\vr(0, \cdot)) \Big] } \bigg)^p + 1 \bigg]
\end{eqnarray}
hold
for any $1 \leq p < \infty$. Because of \eqref{energy} this solution is called finite energy weak martingale solution.
\end{Theorem}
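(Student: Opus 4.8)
\section*{Proof proposal}

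The plan is to build the solution by a multi-level Faedo--Galerkin approximation combined with artificial viscosity and artificial pressure, mirroring the Lions--Feireisl theory of the deterministic compressible system but carried out on the level of stochastic evolution equations. First I would fix an orthonormal basis of $W^{1,2}(\tor;R^3)$ and project the momentum balance onto the span $X_n$ of its first $n$ elements, while the continuity equation is regularized by an artificial viscosity term $\ep\Delta\vr$ (keeping $\vr$ strictly positive and smooth) and the pressure is replaced by $p(\vr)+\delta\vr^\beta$ with $\beta$ large. Note that no noise enters \eqref{eq1}, so for a fixed velocity path the density equation is a pathwise nondegenerate parabolic problem; coupled to the finite-dimensional It\^o system for the Galerkin coefficients it is solved by a fixed-point and stopping-time argument, global existence following from the energy bound. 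At this level the energy balance is an \emph{equality}, obtained by It\^o's formula applied to $\intTor{[\tfrac12\vr|\vu|^2+H_\delta(\vr)]}$ (with $H_\delta$ the modified potential), the It\^o correction producing exactly the quadratic term $\tfrac12\sum_{k}\intTor{|\vc{G}_k|^2/\vr}$.

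The second step is to send $n\to\infty$, then $\ep\to0$, then $\delta\to0$. At each level the energy equality, after taking expectations and applying the Burkholder--Davis--Gundy inequality, yields uniform bounds of the form \eqref{energy}, controlling $\vr$ in $L^\infty_tL^\gamma_x$, $\sqrt{\vr}\vu$ in $L^\infty_tL^2_x$ and $\vu$ in $L^2_tW^{1,2}_x$ in expectation to every power $p$. From these I would establish tightness of the laws of the approximations on a suitable path space (weak topologies for $\vr$ and $\vr\vu$, strong $L^2_tW^{-1,2}_x$ convergence of $\vr\vu$ via the equation and a stochastic Aubin--Lions argument, and $C([0,T];\mathfrak{U}_0)$ for $W$). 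Since several of these spaces are non-metrizable in their weak topologies, the passage to almost sure convergence uses the Jakubowski--Skorokhod representation theorem, producing a new stochastic basis on which the approximations converge $\prst$-a.s. The limiting It\^o integral is then identified by the martingale method: one checks that the candidate processes, together with the quadratic variation dictated by $\mathbb{G}$, satisfy the defining martingale identities, using the continuity and sublinearity assumptions \eqref{FG1}--\eqref{FG2} to pass the limit inside $\mathbb{G}(\vr,\vr\vu)$.

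The main obstacle, exactly as in the deterministic case, is the strong compactness of the density needed to pass to the limit in the nonlinear pressure $p(\vr)$: weak convergence only delivers a limit $\overline{p(\vr)}$, and one must prove $\overline{p(\vr)}=p(\vr)$. The decisive tool is the effective viscous flux identity: testing the momentum equation with functions of the form $\nabla\Delta^{-1}[\vr]$ and invoking It\^o's formula, one shows that $\overline{p(\vr)\,\vr}-\overline{p(\vr)}\,\vr$ is governed by $(\tfrac43\mu+\eta)(\overline{\vr\Div\vu}-\vr\,\Div\vu)$, the stochastic contributions cancelling in the limit because the same $\mathbb{G}$ enters both factors. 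Combined with the renormalized continuity equation and a bound on the oscillation defect measure, this forces strong $L^1$ convergence of the densities through a Gronwall estimate on $\intTor{(\overline{\vr\log\vr}-\vr\log\vr)}$. The renormalized equation itself comes from the DiPerna--Lions regularization, the commutator being controlled by Friedrichs' lemma; it holds directly at the artificial-viscosity level by smoothness of $\vr$ and is preserved once strong density convergence is in hand.

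Finally, I would assemble these pieces: strong density convergence upgrades $\overline{p(\vr)}$ to $p(\vr)$ and lets the artificial terms $\ep\Delta\vr$ and $\delta\vr^\beta$ vanish, producing a weak martingale solution in the sense of Definition~\ref{DM1}. The estimates \eqref{energy} survive all three limits by weak lower semicontinuity of the convex energy functional together with Fatou's lemma for the expectations, the right-hand side depending only on the converging initial law $\Lambda$. The hypothesis $\gamma>3/2$ is used to guarantee that $\vr\vu\otimes\vu$ and $p(\vr)$ remain integrable so the limit identities make sense, and it is precisely what makes the pressure estimates close in the $\delta\to0$ passage.
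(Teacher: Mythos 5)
Your proposal is correct and follows essentially the same route as the paper, which does not reprove Theorem \ref{thm:exist} but cites \cite{BrHo} and recalls precisely this scheme in Section \ref{EI}: a four-layer approximation (Faedo--Galerkin with a stopping-time argument, artificial viscosity $\ep\Delta\vr$, artificial pressure $\delta\vr^\beta$), It\^o's formula for the energy, stochastic compactness via the Jakubowski--Skorokhod representation, martingale identification of the noise term, and the effective viscous flux together with the renormalized continuity equation for strong density convergence. The only cosmetic discrepancy is that at the first approximation level the energy balance is an inequality rather than an equality once the nonnegative artificial-viscosity dissipation $\ep\intTor{H_\delta''(\vr)|\Grad\vr|^2}$ is discarded.
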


\begin{Remark}
\label{R1}

Note that the energy
\[
\intTor{ \left( \frac{1}{2} {\vr |\vu|^2 } + H(\vr) \right) }
\]
is a priori defined only for a.a. $t \in (0,T)$ while
\[
[\vr, \vc{q}] \mapsto  \frac{| \vc{q}|^2 }{2\vr} + H(\vr)
\]
is a convex function of its arguments and the composition
\[
\intTor{ \left( \frac{|\vr \vu|^2 }{2 \vr} + H(\vr) \right) }
\]
is therefore defined for \emph{any} $t \in [0,T]$ $\mathbb{P}-$a.s. Moreover, we have
\[
\intTor{ \left( \frac{1}{2} {\vr |\vu|^2 } + H(\vr) \right) } = \intTor{ \left( \frac{|\vr \vu|^2 }{2 \vr} + H(\vr) \right) }
\ \mbox{a.e. in}\ (0,T)
\]
and
\[
\expe{ \left( \intTor{ \left( \frac{|(\vr \vu) (0, \cdot)|^2 }{2 \vr(0, \cdot)} +  H(\vr(0, \cdot)) \right) } \right)^p }
= \int_{ L^\gamma \times L^{2\gamma/(\gamma + 1)}} \left\| \frac{1}{2} \frac{ |\vc{q}|^2}{\vr}  + H(\vr) \right\|_{L^1(\tor)}^p
\ {\rm d} \Lambda (\vr, \vc{q} )
\]
for any martingale solution with the initial law $\Lambda$.

\color{black}

\end{Remark}

\subsection{Energy inequality}

The piece of information provided by (\ref{energy}) is not sufficient for proving the relative energy inequality in the form suitable
for applications. Our first goal is therefore to prove a refined version of (\ref{energy}). Revisiting the original existence proof in \cite{BrHo} we deduce the following result proved in Section \ref{EI} below.

\begin{Proposition}
\label{prop:energynew}
Under the hypotheses of Theorem \ref{thm:exist}, let $\left( \StoB, \vr, \vu, W \right)$ be the finite energy weak martingale solution
constructed via the scheme proposed in \cite{BrHo}.
Then there exists a real-valued martingale $M_E$, satisfying
\[
\E\bigg[ \sup_{t \in [0,T]} |M_E|^p \bigg] \leq c(p) \left( 1 + \E\bigg[\intTor{ \left( \frac{|(\vr \vu)(0, \cdot)|^2 }{2 \vr(0, \cdot)} +  H(\vr(0, \cdot)) \right) } \bigg]^p
\right)
\]
for any $1 \leq p < \infty$ such that the energy inequality (\ref{EI2}) holds for any
spatially homogeneous ($x$-independent) deterministic function  $\psi$,
\begin{equation} \label{testf}
\psi \in W^{1,1} [0,T],\
\psi \geq 0,\ \psi (T) = 0, \ \int_0^T | \partial_t \psi | \ \dt  < \infty .
\end{equation}

\end{Proposition}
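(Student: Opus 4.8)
The plan is to return to the layered approximation scheme underlying Theorem~\ref{thm:exist} as constructed in \cite{BrHo}, since the Proposition concerns precisely the solution produced by that scheme. Recall that the martingale solution is obtained as a limit of finite-dimensional Galerkin approximations, regularized by an artificial viscosity $\ep\Delta\vr$ in the continuity equation and an artificial pressure of the form $\delta\vr^\beta$. At each such approximate level the velocity belongs to a finite-dimensional space and the pair $(\vr,\vu)$ solves a system of It\^o stochastic differential equations with smooth coefficients, so that It\^o's formula applies rigorously to the energy functional. The essential point is that the stochastic integral generated by It\^o's formula is exactly the candidate for the martingale $M_E$, while the second-order It\^o correction reproduces the term $\tfrac12\sum_{k\geq1}\intTor{|\vc{G}_k|^2/\vr}$ on the right-hand side of \eqref{EI2}.

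First I would apply It\^o's formula to the convex map $(\vr,\vr\vu)\mapsto\intTor{[\tfrac12|\vr\vu|^2/\vr+H(\vr)]}$ at the approximate level to obtain an energy \emph{equality} of the form
\begin{equation*}
\Dif E + \intTor{\tn{S}(\Grad\vu):\Grad\vu}\,\dt = \tfrac12\sum_{k\geq1}\intTor{\frac{|\vc{G}_k(\vr,\vr\vu)|^2}{\vr}}\,\dt + \Dif M_E + \Dif R,
\end{equation*}
where $E=\intTor{[\tfrac12\vr|\vu|^2+H(\vr)]}$, the martingale increment is $\Dif M_E=\sum_{k\geq1}\intTor{\vu\cdot\vc{G}_k(\vr,\vr\vu)}\,\Dif W_k$, and $\Dif R$ collects the contributions of the regularizing terms, which either carry a favourable (non-positive) sign or may be discarded when passing to the limit. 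Testing this balance against a deterministic $\psi$ as in \eqref{testf} and integrating by parts in time via the product rule $\Dif(\psi E)=\psi\,\Dif E + E\,\partial_t\psi\,\dt$ --- legitimate because $\psi$ is of bounded variation and deterministic, hence commutes with the stochastic integral, and $\psi(T)=0$ removes the endpoint term --- yields the weak differential form of the balance at the approximate level.

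Next I would pass successively to the limits in the Galerkin dimension and in the regularizing parameters $\ep,\delta$, following the compactness analysis of \cite{BrHo}; by the Skorokhod representation theorem the relevant convergences may be assumed to hold almost surely on a new probability space. The dissipation functional $\vu\mapsto\intTor{\tn{S}(\Grad\vu):\Grad\vu}$ and the convex energy are both weakly lower semicontinuous, which is exactly what produces the inequality direction of \eqref{EI2} in the limit; here one works with the everywhere-defined convex representative of the energy discussed in Remark~\ref{R1}. The It\^o correction passes to the limit thanks to the structural bounds \eqref{FG1}--\eqref{FG2} on $\vc{G}_k$, and the approximate martingales converge to $\sum_{k\geq1}\int_0^\cdot\intTor{\vu\cdot\vc{G}_k(\vr,\vr\vu)}\,\Dif W_k$, whose identification as a genuine $(\mf_t)$-martingale follows from convergence of the integrands in $L^2(\Omega\times(0,T))$ and the standard lemma on limits of stochastic integrals. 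Finally, for the moment bound I would estimate the quadratic variation by Cauchy--Schwarz,
\begin{equation*}
\va{M_E}(t)=\int_0^t\sum_{k\geq1}\Big|\intTor{\vu\cdot\vc{G}_k(\vr,\vr\vu)}\Big|^2\,\dt\leq\int_0^t\Big(\intTor{\vr|\vu|^2}\Big)\Big(\sum_{k\geq1}\intTor{\frac{|\vc{G}_k|^2}{\vr}}\Big)\,\dt,
\end{equation*}
bound $\sum_k|\vc{G}_k|^2/\vr$ by a constant multiple of the energy using \eqref{FG2}, and then apply Burkholder--Davis--Gundy together with the energy estimate \eqref{energy} to control $\E[\sup_{[0,T]}|M_E|^p]$.

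The hard part will be the passage to the limit in the martingale term: one must verify that the approximate stochastic integrals converge to a stochastic integral with respect to the \emph{limit} cylindrical Wiener process and correctly identify this limit as $M_E$, a step that genuinely requires the Skorokhod construction and careful bookkeeping of adaptedness, compounded by the fact that the energy is a priori defined only for almost every $t$ (so that the convex representative of Remark~\ref{R1} must be used throughout). The remaining ingredients --- the sign of the regularizing terms, lower semicontinuity, and the Burkholder--Davis--Gundy estimate --- are by comparison routine.
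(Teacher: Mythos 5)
Your overall skeleton coincides with the paper's: recall the energy balance at the first approximation level of \cite{BrHo} (the paper quotes it directly as the inequality \eqref{ei1} rather than re-deriving an equality with a signed remainder, but that is cosmetic), multiply by the deterministic $\psi$ via It\^o's product rule to get the weak differential form, and then pass to the limit in $N$, $\ep$, $\delta$ using the stochastic compactness method, lower semicontinuity for the dissipation, and domination of $\sum_k |\vc{G}_k|^2/\vr$ by the energy for the It\^o correction term. The one place where you genuinely diverge is the treatment of the martingale in the limit, and it is exactly the step you flag as hard. You propose to identify $M_E$ as the stochastic integral $\sum_k\int_0^\cdot\intTor{\vu\cdot\vc{G}_k(\vr,\vr\vu)}\,\Dif W_k$ with respect to the \emph{limit} Wiener process. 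This is more delicate than you suggest: unlike the identification of the stochastic integral in the momentum equation, where the integrand is paired with a fixed smooth test function $\phi$, here the integrand is $\intTor{\vu\cdot\vc{G}_k(\vr,\vr\vu)}$ with $\vu$ only weakly convergent in $L^2(0,T;W^{1,2})$, so the convergence of this product of weakly/strongly convergent sequences is not covered by the standard lemma you invoke, and your claim of convergence of the integrands in $L^2(\Omega\times(0,T))$ would need a separate argument.

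The paper sidesteps this entirely, and this is worth internalizing because the Proposition only asserts the existence of \emph{some} real-valued martingale $M_E$ with the stated moment bound. The device is: set $M_N(t)=\sum_k\int_0^t\intTor{\vu\cdot\vc{G}^N_k}\,\Dif W_k$ at the approximate level, observe that the laws of $M_N$ are tight on $C[0,T]$ by the uniform (BDG-type) estimates, adjoin $M_N$ to the Jakubowski--Skorokhod argument of \cite[Proposition 4.5]{BrHo}, and use that the class of continuous square-integrable martingales is closed under this convergence to conclude that the a.s. limit $\tilde M$ is again a martingale --- no identification as a stochastic integral is ever needed, and the moment bound is inherited from the uniform bounds on the approximations (your Cauchy--Schwarz plus BDG computation is the right estimate, just performed before the limit rather than after). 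So your proposal is correct in outline but takes on an identification problem that the paper deliberately avoids; either you should adopt the abstract-limit argument, or you must supply the missing strong convergence of $\vu\cdot\vc{G}_k(\vr,\vr\vu)$.
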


\color{black}

\begin{Definition} \label{D2}

A weak martingale solution of problem (\ref{eq1}--\ref{eq3}) satisfying the energy inequality (\ref{EI2}) will be called
\emph{dissipative} martingale solution.

\end{Definition}

\subsection{Relative energy/entropy inequality}

Our main result is the following theorem.

\begin{Theorem} \label{thREI}
Under the hypothesis of Theorem \ref{thm:exist}, let
\[
\left[ \StoB; \vr, \vu, W \right]
\]
be a dissipative martingale solution of problem (\ref{eq1}--\ref{eq3}) in $[0,T]$. Suppose that functions $r$, $\vc{U}$ are random processes
adapted to $\{ \mathfrak{F}_t \}_{t \geq 0}$,
\[
r \in C([0,T]; W^{1,q}(\tor)), \ \vc{U} \in C([0,T]; W^{1,q}(\tor, R^3)) \ \quad\text{$\mathbb{P}$-a.s. for all}\quad 1 \leq q < \infty,
\]
\[
\E\bigg[\sup_{t \in [0,T] } \| r \|_{W^{1,q}(\tor)}^2\bigg]^q  + \E\bigg[ \sup_{t \in [0,T] } \| \vc{U} \|_{W^{1,q}(\tor;R^3)}^2\bigg]^q \leq c(q),
\]
\begin{equation} \label{bound}
0 < \underline{r} \leq r(t,x) \leq \overline{r} \quad\text{$\mathbb{P}$-a.s.},
\end{equation}
Moreover, $r$, $\vc{U}$ satisfy (\ref{difer}), where
\begin{align*}
&\DD r, \DD \vc{U}\in L^q(\Omega;L^q(0,T;W^{1,q}(\mt))),\quad
\DS r,\DS \vc{U}\in L^2(\Omega;L^2(0,T;L_2(\mathfrak U;L^2(\tor)))),\\
&\bigg(\sum_{k\geq 1}|\DS r(e_k)|^q\bigg)^\frac{1}{q},\bigg(\sum_{k\geq 1}|\DS \vc{U}(e_k)|^q\bigg)^\frac{1}{q}\in L^q(\Omega;L^q(0,T;L^{q}(\mt))).
\end{align*}

Then the relative energy inequality (\ref{REI2}), (\ref{rem}) holds for any $\psi$ satisfying (\ref{testf}), where
the norm of the martingale $M_R$ depends only on the norms of $r$ and $\vc{U}$ in the aforementioned spaces.

\end{Theorem}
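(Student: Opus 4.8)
The plan is to build the relative energy from the ``pure'' energy, whose dynamics are already encoded in \eqref{EI2}, and to compute the It\^o differentials of the remaining terms of \eqref{eq:entrdef} directly. Expanding
\[
\frac12\vr|\vu-\vc{U}|^2=\frac12\vr|\vu|^2-\vr\vu\cdot\vc{U}+\frac12\vr|\vc{U}|^2
\]
and using the thermodynamic identity $rH'(r)-H(r)=p(r)$ to recast the potential part as $-H'(r)\vr+p(r)$, the theorem reduces to determining the stochastic differentials of the four functionals
\[
-\intTor{\vr\vu\cdot\vc{U}},\qquad \tfrac12\intTor{\vr|\vc{U}|^2},\qquad -\intTor{H'(r)\vr},\qquad \intTor{p(r)}.
\]
The dynamics of each factor are available: $\vr$ and $\vr\vu$ evolve by the weak formulation in Definition \ref{DM1}, while $r$ and $\vc{U}$ obey \eqref{difer}. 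I would then collect all contributions; the deterministic drift terms reproduce precisely the remainder of the deterministic relative energy inequality of \cite{FENOSU}, so the genuinely new work is to track the It\^o corrections and to verify that they coincide with the last three integrals of \eqref{rem}.

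The crucial simplification is that the continuity equation \eqref{eq1} carries no noise, so $\vr$ has no martingale part and produces no quadratic covariation; hence the only It\^o corrections come from the noise of $\vr\vu$ and from the noises of $r,\vc{U}$. For $-\intTor{\vr\vu\cdot\vc{U}}$ the product rule yields the cross-variation drift $-\sum_{k\geq1}\intTor{\vc{G}_k\cdot\DS\vc{U}(e_k)}$, coupling $\mathbb{G}(\vr,\vr\vu)\Dif W$ with $\DS\vc{U}\,\Dif W$. For $\tfrac12\intTor{\vr|\vc{U}|^2}$ the expansion $\Dif|\vc{U}|^2=2\vc{U}\cdot\Dif\vc{U}+\sum_{k\geq1}|\DS\vc{U}(e_k)|^2\,\dt$ contributes $\tfrac12\sum_{k\geq1}\intTor{\vr|\DS\vc{U}(e_k)|^2}$. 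Adding the solution's own correction $\tfrac12\sum_{k\geq1}\intTor{|\vc{G}_k|^2/\vr}$ furnished by \eqref{EI2} and completing the square produces exactly $\tfrac12\sum_{k\geq1}\intTor{\vr\,|\vc{G}_k/\vr-\DS\vc{U}(e_k)|^2}$, the fifth integral of \eqref{rem}. Likewise, the second-order terms arising from the compositions $H'(r)$ and $p(r)$ against the quadratic variation $\sum_{k\geq1}|\DS r(e_k)|^2\,\dt$ yield the two remaining integrals involving $H'''(r)$ and $p''(r)$. The stochastic integrals assemble into the martingale $M_{RE}$ (which contains $M_E$ from \eqref{EI2}), and after adding and subtracting $\tn{S}(\Grad\vc{U})$ the viscous drifts form the dissipation $\intTor{(\mathbb{S}(\Grad\vu)-\tn{S}(\Grad\vc{U})):(\Grad\vu-\Grad\vc{U})}$ on the left-hand side, leaving the residual term $\intTor{\tn{S}(\Grad\vc{U}):(\Grad\vc{U}-\Grad\vu)}$ in $\mathcal{R}$.

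The main obstacle is rigour: the weak formulation of Definition \ref{DM1} is stated only for deterministic smooth test functions, whereas here one must test against the random, time-dependent and merely $W^{1,q}$-regular processes $\vc{U}$ and against nonlinear functions of $r$ such as $H'(r)$ and $p(r)$, and moreover apply an It\^o product rule in which the momentum $\vr\vu$ is only defined weakly (the stochastic integral living in $W^{-\lambda,2}(\tor)$). I would resolve this by a space-mollification and density argument: regularise $r$ and $\vc{U}$ in $x$ into smooth adapted processes for which the differentials are legitimate, derive the identity there, and pass to the limit using the integrability hypotheses on $\DD r,\DD\vc{U},\DS r,\DS\vc{U}$, the bounds \eqref{bound}, and the renormalised continuity equation of Theorem \ref{thm:exist}. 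The lower bound $0<\underline{r}\le r$ is indispensable to control $H'(r),H''(r),H'''(r),p''(r)$ and the division by $r$, while the hypotheses that $(\sum_{k\geq1}|\DS r(e_k)|^q)^{1/q}$ and $(\sum_{k\geq1}|\DS\vc{U}(e_k)|^q)^{1/q}$ lie in $L^q$ are precisely what render the three correction integrals integrable. Finally, the square-integrability of $M_{RE}$ and the asserted dependence of its norm on the norms of $r,\vc{U}$ follow from the Burkholder--Davis--Gundy inequality combined with \eqref{FG2}, the energy estimate \eqref{energy}, and the assumed bounds on $r$ and $\vc{U}$.
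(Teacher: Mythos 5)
Your proposal follows essentially the same route as the paper: the identical decomposition of $\mathcal{E}$ into the energy functional (handled by \eqref{EI2}) plus the cross and potential terms, the same It\^{o} corrections assembled by completing the square into $\frac12\sum_{k\geq1}\intTor{\vr|\vc{G}_k/\vr-\DS\vc{U}(e_k)|^2}$ together with the $H'''(r)$ and $p''(r)$ terms, and the same spatial-mollification device to justify the product rule when the momentum is only weakly defined (the paper packages this as Lemma \ref{lem}, mollifying $\vr\vu$ rather than $r,\vc{U}$, and explicitly remarks that regularizing either factor works). I see no gap.
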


\color{black}

\begin{Remark} \label{Rem2}

Hypothesis (\ref{bound}) seems rather restrictive and even unrealistic in view of the expected properties
of random processes. On the other hand, it is necessary to handle the compositions of the non-linearities, in particular the pressure
$p = p(r)$. Note that (\ref{bound}) can always be achieved replacing $r$ by $\tilde r$, where
\[
\tilde r(t) = r (t \wedge \tau_{\underline{r}, \Ov{r}}),
\]
where $\tau_{\underline{r}, \Ov{r}}$ is a stopping time,
\[
\tau_{\underline{r}, \Ov{r}} = \inf_{t\in[0,T]} \left\{ \inf_{\tor} r (t, \cdot) < \underline{r} \ \mbox{or}\
\sup_{\tor} r (t, \cdot) > \Ov{r} \right\}.
\]

\end{Remark}

\color{black}

\begin{Remark} \label{Rem3}

For the sake of simplicity, we prove Theorem \ref{thREI} in the natural 3D-setting. The same result holds in the dimensions 1 and 2
as well.

\end{Remark}

\color{black}

Theorem \ref{thREI} will be proved in the next section.

\section{Relative energy inequality}
\label{REI}

Our goal in this section is to prove Theorem \ref{thREI}.

\subsection{Energy inequality - proof of Proposition \ref{prop:energynew} }
\label{EI}

The main objective of this section is the proof of the energy inequality (\ref{EI2}) claimed in Proposition \ref{prop:energynew}. To this end, we adapt the construction of the martingale solution in \cite{BrHo}.
First, let us briefly recall the method of the proof of \cite[Theorem 2.2]{BrHo}.
It is based on a four layer approximation scheme: the continuum equation is regularized by means of an artificial viscosity $\ep \Delta \vr$ and the momentum equation is modified correspondingly so that the energy inequality is preserved. In addition, an artificial pressure term $\delta \Grad \vr^\beta$ to (\ref{eq2}) to weaken the hypothesis upon the adiabatic constant $\gamma$.
The aim is to pass to the limit first in $\varepsilon\rightarrow0$ and subsequently in $\delta\rightarrow0$, however, in order to solve the approximate problem for $\varepsilon>0$ and $\delta>0$ fixed two additional approximation layers are needed. In particular, a stopping time technique is employed to establish the existence of a unique solution to a finite-dimensional approximation, the so called Faedo-Galerkin approximation, on each random time interval $[0,\tau_R)$ where the stopping time $\tau_R$ is defined as
$$\tau_R=\inf\big\{t\in[0,T];\|\bfu\|_{L^\infty}\geq R\big\}\wedge\inf\bigg\{t\in[0,T];\bigg\|\int_0^t\mathbb{G}^{N}\big(\varrho,\varrho\bfu\big)\,\dif W\bigg\|_{L^\infty}\geq R\bigg\}$$
(with the convention $\inf\emptyset=T$), where $\mathbb{G}$ is a suitable finite-dimensional approximation of $\mathbb{G}$. It is then showed that the blow up cannot occur in a finite time so letting $R\rightarrow\infty$ gives a unique solution to the Faedo-Galerkin approximation on the whole time interval $[0,T]$. 
The remaining passages to the limit, i.e. $N\rightarrow\infty$, $\varepsilon\to0$ and $\delta\to0$, are justified via the stochastic compactness method.

\medskip

{\it First approximation level:}

\medskip

To simplify notation, we drop the indexes $N$, $\ep$, and $\delta$ and denote
$\vr$, $\vu$ the basic family of approximate solutions constructed in \cite[Subsection 3.1]{BrHo}, specifically,
they solve the fixed point problem \cite[(3.6)]{BrHo} on a corresponding random time interval $[0,\tau_R)$. Inspecting the proof of
\cite[Proposition 3.1]{BrHo} we deduce
\begin{equation} \label{ei1}
\begin{split}
{\rm d} \left( \intTor{ \Big[ \frac{1}{2} \varrho | {\bf u} |^2 + H_\delta(\varrho) \Big] } \right) &+ \left( \intTor{ \mathbb{S} (\nabla {\bf u}): \nabla
{\bf u} } \right) \ {\rm d}t
\\
\leq  &\left( \intTor{ \vu \cdot {\tn{G}}^N (\varrho, \varrho \vu ) } \right) {\rm d} W + \frac{1}{2} \bigg( \sum_{k \geq 1}
\intTor{ \frac{ | {\bf G}_k (\varrho, \varrho {\bf u}) |^2 }{\varrho} } \bigg) {\rm d}t,
\end{split}
\end{equation}
where
\[
H_\delta (\vr) = H(\vr) + \frac{\delta}{\beta - 1} \vr^\beta,
\]
and $\tn{G}^N(\vr, \vr \vu)$ is the approximation of $\tn{G}(\vr, \vr \vu)$ introduced in \cite[formula (3.2)]{BrHo}. It follows from
\cite[Corollary 3.2]{BrHo} that (\ref{ei1}) holds on the whole time interval $[0,T]$.

Now we may apply It\^{o}'s product formula to compute
\[
\Dif \Big[\Big( \frac{1}{2} \varrho | {\bf u} |^2 + H_\delta (\varrho) \Big) \psi \Big],
\]
where $\psi$ is a spatially homogeneous test function satisfying (\ref{testf}):
\[
\begin{split}
{\rm d} \Big( \psi \int_{\tor} \Big[ \frac{1}{2} \varrho | {\bf u} |^2 &+ H_\delta (\varrho) \Big] \ \dx  \Big)
\\
&= \left( \intTor{ \Big[ \frac{1}{2} \varrho | {\bf u} |^2 + H_\delta (\varrho) \Big] } \ \partial_t \psi \right) {\rm d}t +
\psi \,{\rm d} \left( \intTor{ \Big[ \frac{1}{2} \varrho | {\bf u} |^2 + H_\delta(\varrho) \Big] } \right)
\\
&\leq \left( \intTor{ \Big[ \frac{1}{2} \varrho | {\bf u} |^2 + H_\delta(\varrho) \Big] } \ \partial_t \psi \right) {\rm d}t
- \left( \intTor{ \mathbb{S} (\nabla {\bf u}): \nabla
{\bf u} } \ \psi \right)  {\rm d}t
\\
&+  \left( \psi \ \intTor{ {\bf u} \cdot {\tn{G}}^N(\varrho, \varrho {\bf u}) } \right) {\rm d}W + \frac{1}{2} \bigg( \sum_{k \geq 1}
\psi \ \intTor{ \frac{ | {\bf G}_k (\varrho, \varrho {\bf u}) |^2 }{\varrho} } \bigg) {\rm d}t.
\end{split}
\]
Thus we may integrate with respect to time to obtain
\begin{equation} \label{ei2}
\begin{split}
\int_0^T \psi \intTor{  \mathbb{S} (\nabla {\bf u}): \nabla {\bf u} } \ {\rm d}t &\leq \psi(0) \intTor{ \Big[
\frac{ |(\vr \vu)(0, \cdot) |^2 }{2\vr(0, \cdot)} + H_\delta(\varrho(0, \cdot)) \Big] }
\\
&+ \int_0^T \partial_t \psi
\left( \intTor{ \Big[ \frac{1}{2} \varrho | {\bf u} |^2 + H_\delta(\varrho) \Big] } \ \right) {\rm d}t
\\
+  \int_0^T  \psi \left( \intTor{ {\bf u} \cdot {\tn{G}}^N (\varrho, \varrho {\bf u}) } \right) {\rm d} W &+ \frac{1}{2} \int_0^T
\psi \bigg( \sum_{k \geq 1}
 \intTor{ \frac{ | {\bf G}_k (\varrho, \varrho {\bf u}) |^2 }{\varrho} } \bigg) {\rm d}t.
\end{split}
\end{equation}

\medskip

{\it Second approximation level:}

\medskip

Our goal is to let $N \to \infty$ in (\ref{ei2}). First, we modify the compactness argument of \cite[Subsection 4.1]{BrHo} as follows: Setting
\[
M_N(t)=\sum_{k\geq1}\int_0^t\intTor{ \vu \cdot {\tn{G}}^{N}(\varrho, \varrho \vu ) } \,\Dif W
\]
and $\mathcal{X}_M=C[0,T]$, we denote by $\mu_{M_N}$ the law of $M_N$. Due to the uniform estimates
obtained in  \cite{BrHo}, each process $M_N$ is a martingale and the set $\{ \mu_{M_N} \}_{N \geq 1}$ is tight on $\mathcal{X}_M$. Therefore we may include the sequence $\{ M_N\}_{N \geq 1}$ to the result of \cite[Proposition 4.5]{BrHo} to obtain, after the change of probability space, a new sequence $\{ \tilde M_N \}_{N \geq 1}$ having the same law as the original $\{ M_N \}_{N \geq 1}$ and converging to some $\tilde M$ a.s. in $\mathcal{X}_M$. Moreover, the space of continuous square integrable martingales is closed we deduce that the limit $\tilde M$ is also a martingale. Besides, it follows from the equality of joint laws that \eqref{ei2} is also satisfied on the new probability space.

Next, by virtue of hypotheses (\ref{FG1}), (\ref{FG2}), the function
\[
[\vr, \vc{q}] \mapsto \sum_{k \geq 1} \frac{ |{\bf G}_k (\varrho, {\bf q}) |^2 }{\varrho} \ \mbox{is continuous},
\]
and
\[
\sum_{k \geq 1} \frac{ |{\bf G}_k (\varrho, {\bf q}) |^2 }{\varrho} \leq c \left(  \varrho  + \frac{| {\bf q} |^2}{\vr}  \right)
\]
is sublinear in $\varrho$ and $|{\bf q}|^2/\vr$ and as such dominated by the total energy
\[
\frac{1}{2} \left( \frac{1}{2} \vr |\vu|^2 + H(\vr) \right) + 1.
\]

Thus following the arguments of \cite[Section 4]{BrHo} we may let $N \to \infty$ in (\ref{ei2}) to conclude
\begin{equation} \label{ei3}
\begin{split}
\int_0^T \psi \intTor{  \mathbb{S} (\nabla {\bf u}): \nabla {\bf u} } \ {\rm d}t &\leq \psi(0) \intTor{ \Big[
\frac{ |(\vr \vu)(0, \cdot) |^2 }{2\vr(0, \cdot)} + H_\delta(\varrho(0, \cdot)) \Big] }
\\
&+ \int_0^T \partial_t \psi
\left( \intTor{ \Big[ \frac{1}{2} \varrho | {\bf u} |^2 + H_\delta(\varrho) \Big] } \ \right) {\rm d}t
\\
+  \int_0^T  \psi\,  {\rm d} \tilde M &+ \frac{1}{2} \int_0^T
\psi \bigg( \sum_{k \geq 1}
 \intTor{ \frac{ | {\bf G}_k (\varrho, \varrho {\bf u}) |^2 }{\varrho} } \bigg) {\rm d}t.
\end{split}
\end{equation}

\medskip

{\it Third and fourth approximation level:}

\medskip

Repeating exactly the same arguments we may let successively $\ep \to 0$ and $\delta \to 0$ in (\ref{ei3}) to obtain (\ref{EI2}) thus proving
Proposition \ref{prop:energynew}

\subsection{Relative energy inequality - proof of Theorem \ref{thREI}}
\label{MEI}

We start with the following auxiliary result.

\begin{Lemma} \label{lem}

Let $s$ be a stochastic process on $\StoB$ such that for some $\lambda\in\R$,
\[
s \in C_{\rm weak}([0,T]; W^{-\lambda,2}(\tor)) \cap L^\infty (0,T; L^1(\tor)) \quad \text{$\mathbb{P}$-a.s.},
\]
\begin{equation} \label{hh1}
 \E\bigg[ \sup_{t \in [0,T]} \| s \|^p_{L^1(\tor)}\bigg]  < \infty \ \mbox{for all}\ 1 \leq p < \infty,
\end{equation}
\begin{equation} \label{rel1}
\Dif s= \DD s\, \dt + \DS s\,\Dif W.
\end{equation}
Here $\DD s, \DS s$ are progressively measurable with
\begin{align} \label{hh2}
\begin{aligned}
\DD s\in &L^p(\Omega;L^1(0,T;W^{-\lambda,q}(\tor)),\quad \DS s\in L^2(\Omega;L^2(0,T;L_2(\mathfrak U;W^{-m,2}(\tor)))),\\
&\sum_{k\geq 1}\int_0^T\|\DS s(e_k)\|^2_1\in L^p(\Omega)\quad 1\leq p<\infty,
\end{aligned}
\end{align}
for some $q>1$ and some $m\in\N$.

Let $r$ be a stochastic process on $\StoB$ satisfying
\[
r \in C([0,T]; W^{\lambda,q'} \cap C (\tor)) \quad \text{$\mathbb{P}$-a.s.},
\]
\begin{equation} \label{hh3}
\E\bigg[ \sup_{t \in [0,T]} \| r \|_{W^{\lambda,q'} \cap C (\tor)}^p \bigg] < \infty,\ 1 \leq p < \infty,
\end{equation}
\begin{equation} \label{rel2}
\Dif r = \DD r + \DS r \, \Dif W.
\end{equation}
Here $\DD r, \DS r$ are progressively measurable with
\begin{align} \label{hh4}
\begin{aligned}
\DD r\in L^p&(\Omega;L^1(0,T;W^{\lambda,q'}\cap C(\tor)),\quad \DS r\in L^2(\Omega;L^2(0,T;L_2(\mathfrak U;W^{-m,2}(\tor)))),\\
&\sum_{k\geq1} \int_0^T\|\DS r(e_k)\|^2_{W^{\lambda,q'}\cap C(\tor)}\dt\in L^p(\Omega)\quad 1\leq p<\infty.
\end{aligned}
\end{align}
Let $Q$ be $[\lambda+2]$-continuously differentiable function satisfying
\begin{equation} \label{hh5}
\E\bigg[\sup_{t \in [0,T]} \| Q^{(j)} (r) \|_{W^{\lambda,q'} \cap C (\tor)}^p \bigg]< \infty \quad j = 0,1,2,\quad 1 \leq p < \infty.
\end{equation}

Then
\begin{equation} \label{result}
\begin{split}
\Dif \left( \intTor{ s Q(r) } \right)
&= \bigg( \intTor{ \Big[  s  \Big( Q'(r) \DD r   + \frac{1}{2}\sum_{k\geq1} Q''(r)
\left| \DS r (e_k)\right|^2  \Big) \Big] }  +  \left< Q(r) , \DD s \right> \bigg) {\rm d}t
\\
&+ \bigg(  \sum_{k\geq1}\intTor{  \DS s(e_k)\,\DS r(e_k)  } \bigg) {\rm d}t
+ {\rm d}\tn{M},
\end{split}
\end{equation}
where
\begin{equation} \label{result1}
\tn{M} = \sum_{k\geq1}\int_0^t\intTor{ \Big[  s  Q'(r) \DS r(e_k)  + Q(r) \DS s(e_k)  \Big] }\,\Dif W_k.
\end{equation}
\end{Lemma}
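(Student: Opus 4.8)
The plan is to read \eqref{result} as an infinite-dimensional Itô product formula for the real-valued process $t\mapsto\left\langle s(t),Q(r(t))\right\rangle$, where $\left\langle\cdot,\cdot\right\rangle$ denotes the duality pairing between $W^{\lambda,q'}(\tor)$ and $W^{-\lambda,q}(\tor)$ and reduces to $\intTor{sQ(r)}$ whenever the integrand is summable. I would derive it in two stages: first apply Itô's formula to the composition $Q(r)$, and then apply the Itô product rule to pair the two semimartingales $s$ and $Q(r)$. Because $s$ takes values only in the negative-order space $W^{-\lambda,2}(\tor)$, neither the products nor Itô's formula can be applied to $s$ directly, so the whole computation will be performed on a spatial regularization $s_\kappa=s\ast\chi_\kappa$ (with $\chi_\kappa$ a standard mollifying kernel), and the regularization will be removed only at the end.

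First I would treat $Q(r)$. Viewing $r$ as a $C(\tor)$-valued Itô process with decomposition \eqref{rel2}, and using that $Q\in C^{[\lambda+2]}$ together with the bounds \eqref{hh3}, \eqref{hh4}, \eqref{hh5}, the Itô formula applied pointwise in $x$ gives
\begin{equation*}
\Dif Q(r)=\Big(Q'(r)\,\DD r+\tfrac12\sum_{k\geq1}Q''(r)\,|\DS r(e_k)|^2\Big)\,\dt+\sum_{k\geq1}Q'(r)\,\DS r(e_k)\,\Dif W_k,
\end{equation*}
understood as an identity of $W^{\lambda,q'}\cap C(\tor)$-valued processes. The quadratic-variation term is the only genuinely new contribution and reflects the cylindrical structure of $W$; its convergence as a series is guaranteed by the summability of $\sum_{k\geq1}\int_0^T\|\DS r(e_k)\|^2$ built into \eqref{hh4}, while \eqref{hh5} keeps $Q'(r),Q''(r)$ in $W^{\lambda,q'}\cap C(\tor)$ with the integrability needed below. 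To make this rigorous I would first truncate the noise to the modes $k\leq K$, apply the classical Itô formula for the resulting real-valued semimartingale, and then send $K\to\infty$.

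Next I would form the product. After regularizing $s$ in space, $\int_{\tor}s_\kappa\,Q(r)\,\dx$ is an honest integral of a product of two real-valued (pointwise in $x$) continuous semimartingales, whose drifts and diffusions are the mollified data $(\DD s)_\kappa,(\DS s)_\kappa$ and the coefficients of $\Dif Q(r)$ above. Applying the scalar Itô product rule, integrating over $\tor$, and collecting terms, I would recover the drift $\left\langle Q(r),\DD s\right\rangle$ coming from $s_\kappa$, the bracket $\intTor{s\big(Q'(r)\DD r+\tfrac12\sum_{k\geq1}Q''(r)|\DS r(e_k)|^2\big)}$ coming from $Q(r)$, the martingale \eqref{result1}, and finally the quadratic cross-variation of the two noise parts, which supplies the remaining $\dt$-term in \eqref{result}. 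The summability assumptions \eqref{hh2} and \eqref{hh4} make this last series converge in $L^1$ and allow me to interchange the sum with the stochastic integration.

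The hard part will be the passage to the limit $\kappa\to0$, which is precisely where the negative regularity of $s$ is felt. Here I would exploit that $Q(r)\in C([0,T];W^{\lambda,q'}\cap C(\tor))$ is a fixed, sufficiently smooth test field: since $(\DD s)_\kappa\to\DD s$ in $L^1(0,T;W^{-\lambda,q}(\tor))$, the drift $\int_0^t\left\langle Q(r),(\DD s)_\kappa\right\rangle$ converges to $\int_0^t\left\langle Q(r),\DD s\right\rangle$, while $s_\kappa\to s$ in $C_{\mathrm{weak}}([0,T];W^{-\lambda,2})\cap L^\infty(0,T;L^1)$ handles the bracket term by duality against the continuous fields $Q'(r)\DD r$ and $Q''(r)|\DS r(e_k)|^2$; the martingale and cross-variation terms are controlled through the Burkholder--Davis--Gundy and Itô isometries using exactly the Hilbert--Schmidt bounds in \eqref{hh2}, \eqref{hh4}, with \eqref{hh1}, \eqref{hh3}, \eqref{hh5} providing the moment estimates. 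The main technical obstacle is therefore to show that the mollification commutators in the drift and in the covariation vanish in the limit despite $s$ carrying only negative regularity, and to justify interchanging $\kappa\to0$ with both the infinite sum over $k$ and the stochastic integration; all of this rests on the pairing structure $W^{\lambda,q'}$--$W^{-\lambda,q}$ (respectively $L^\infty$--$L^1$) and the stated integrability of the coefficients.
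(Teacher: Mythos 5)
Your proposal is correct and follows essentially the same route as the paper's proof: apply It\^{o}'s chain rule to $Q(r)$ pointwise in $x$ (legitimate by \eqref{hh3}), mollify $s$ in space so that the scalar It\^{o} product rule applies pointwise, integrate over $\tor$, and remove the mollification at the end using \eqref{hh1}--\eqref{hh5}. In fact you supply somewhat more detail on the final limit passage (duality pairing for the drift, BDG/It\^{o} isometry for the martingale and cross-variation terms) than the paper, which simply asserts that the hypotheses permit the limit.
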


{\bf Proof:}

\medskip

In accordance with hypothesis (\ref{hh3}),
relation (\ref{rel2}) holds point-wise in $\tor$. Consequently, we may apply It\^{o}'s chain rule
to obtain
\begin{equation} \label{step1}
\Dif Q(r) = Q'(r) \left[ \DD r  {\rm d}t + \DS r \,{\rm d}W \right] + \frac{1}{2} \sum_{k\geq1}Q''(r) \left|
 \DS r(e_k) \right|^2 {\rm d}t
\end{equation}
pointwise in $\tor$.

Next, we regularize (\ref{rel1}) by taking a spatial convolution with a suitable family of regularizing kernels. Denoting $[v]_\delta$ the regularization
of $v$, we may write
\[
\Dif [s]_\delta = \big[ \DD s \big]_\delta \ \dt + \big[ \DS s \big]_\delta \, \Dif W
\]
pointwise in $\tor$. Thus by It\^{o}'s product rule
\begin{equation} \label{step2}
\begin{split}
{\rm d} \Big( [ s ]_\delta  Q(r) \Big) &= \left[ s \right]_\delta {\rm d} Q(r) + Q(r) {\rm d} [s]_\delta +\sum_{k\geq1}
 [ \DS s  ]_\delta(e_k) \, \DS r(e_k)  \ {\rm d}t
\\
&= \bigg[ [ s ]_\delta \bigg( Q'(r) \DD r   + \frac{1}{2}\sum_{k\geq1} Q''(r)
\left| \DS r(e_k) \right|^2  \bigg)  + Q(r) \big[ \DD s \big]_\delta  \bigg] {\rm d}t
\\
&+ \Big[ [ s ]_\delta Q'(r)  \DS r   + Q(r) \left[  \DS s  \right]_\delta \Big] \,{\rm d} W
 +
\sum_{k\geq1}\left[ \DS s  \right]_\delta (e_k) \,\DS r(e_k)  \ {\rm d}t
\end{split}
\end{equation}
pointwise in $\tor$.
Integrating (\ref{step2}) we therefore obtain
\begin{equation} \label{step3}
\begin{split}
{\rm d} &\intTor{ [ s ]_\delta  Q(r) }
= \intTor{ \bigg[ [ s ]_\delta \bigg( Q'(r) \DD r   + \frac{1}{2}\sum_{k\geq1} Q''(r)
\left| \DS r(e_k) \right|^2  \bigg)  + Q(r) \left[ \DD s \right]_\delta  \bigg] } {\rm d}t
\\
&+ \intTor{ \Big[ [ s ]_\delta Q'(r)  \DS r   + Q(r) \left[  \DS s  \right]_\delta \Big] } \,{\rm d} W
 +
\sum_{k\geq1}\intTor{ \left[ \DS s  \right]_\delta(e_k)  \,  \DS r(e_k) } \ {\rm d}t.
\end{split}
\end{equation}

Finally, using hypotheses (\ref{hh1}), (\ref{hh2}), (\ref{hh3}), (\ref{hh4}), and (\ref{hh5})
we are able to
perform the limit $\delta \to 0$ in (\ref{step3}) completing the proof.

\qed

\begin{Remark}

The result stated in Lemma \ref{lem} is not optimal with respect to the regularity properties of the processes
$r$ and $s$. As a matter of fact, we could regularize both $r$ and $s$ in the above proof to conclude that
(\ref{result}) holds as long as all expressions in (\ref{result}), (\ref{result1}) are well defined.

\end{Remark}

\color{black}

Now, we are ready to complete the proof of the relative energy inequality (\ref{REI2}). We start by writing
\[
\begin{split}
\mathcal{E} \left( \varrho, \vu \ \Big| \ r, {\bf U} \right)
&=
\intTor{ \left[ \frac{1}{2} \vr |\vu|^2 + H(\varrho) \right] }
\\
&- \intTor{ \varrho \vu \cdot {\bf U} }
+ \intTor{ \frac{1}{2} \vr |{\bf U}|^2 } - \intTor{ \vr H'(r) } - \intTor{ \left[ H'(r) r - H(r) \right] }.
\end{split}
\]
As the time evolution of the first integral is governed by the energy inequality (\ref{EI2}), it remains to compute the time differentials
of the remaining terms with the help of Lemma \ref{lem}.

\medskip

\noindent {\bf Step 1:}

\medskip

To compute $\Dif \intTor{ \vr \vu \cdot \vc{U} }$ we recall that $s = \varrho \vu $ satisfies hypotheses (\ref{hh1}), (\ref{hh2})
with $l=1$ and some $q < \infty$.  Applying Lemma \ref{lem} we obtain
\begin{equation} \label{I1}
\begin{split}
\Dif \left( \intTor{ \vr \vu \cdot {\bf U} } \right) &= \left( \intTor{ \left[ \vr \left( \vu \cdot \DD {\bf U}
+ \vu \cdot \nabla {\bf U}  \cdot \vu \right) + \Div {\bf U} p(\vr) - \tn{S} (\Grad \vu ): \nabla {\bf U} \right] } \right)  {\rm d}t \\
& +  \sum_{k\geq1}\intTor{ \DS {\bf U}(e_k) \cdot\vc{G}_k (\varrho,\varrho\vu)}\, {\rm d}t+ \Dif M_1  ,
\end{split}
\end{equation}
where
\[
M_1(t) = \int_0^t \intTor{ \vc{U} \cdot \tn{G}(\vr, \vr \vu) } \,\Dif W + \int_0^t \intTor{ \vr \vu \cdot \DS \vc{U} } \,\Dif W
\]
is a square integrable martingale.

\color{black}

\medskip

\noindent
{\bf Step 2:}

\medskip

Similarly, we compute
\begin{equation} \label{I2}
\begin{split}
\Dif \left( \intTor{ \frac{1}{2} \vr |\vc{U}|^2 } \right) &=
\intTor{ \varrho \vu \cdot \Grad {\bf U} \cdot {\bf U} }  {\rm d}t\\
&+  \intTor{ \vr {\bf U} \cdot  \DD {\bf U} } {\rm d}t
+ \frac{1}{2} \sum_{k\geq1}\intTor{ \vr |\DS {\bf U}(e_k)|^2 } \ {\rm d}t + {\rm d}M_2,
\end{split}
\end{equation}
\[
M_2 = \int_0^t \intTor{ \vr \vc{U} \cdot \DS \vc{U} } \, \Dif W,
\]
\begin{equation} \label{I3}
{\rm d} \left( \intTor{ \left[ H'(r) r - H(r) \right] } \right)
= \intTor{ p'(r) \DD r } \ {\rm d}t + \frac{1}{2}\sum_{k\geq1} \intTor{ p''(r) |\DS r(e_k)|^2 } \ {\rm d}t + \Dif M_3,
\end{equation}
\[
M_3 = \int_0^t \intTor{ p'(r) \DS r } \,\Dif W,
\]
and, finally,
\begin{equation} \label{I4}
\begin{split}
{\rm d} \left( \intTor{ \varrho H'(r) } \right) &=
 + \intTor{ \varrho \Grad H'(r) \cdot \vu } \ {\rm d}t \\ &+
\intTor{ \vr H''(r) \DD r } \ {\rm d}t + \frac{1}{2}\sum_{k\geq1} \intTor{ \vr H'''(r) |\DS r(e_k) |^2 } \ {\rm d}t
+ \Dif M_4,
\end{split}
\end{equation}
\[
M_4(t) = \int_0^t \intTor{ \vr H''(r) \DS r } \,\Dif W.
\]

\medskip

\noindent
{\bf Step 3:}

Now, we can derive a ``differential form'' of (\ref{I1}--\ref{I4}) similar to (\ref{EI2}) by applying Lemma (\ref{lem}) to the product with
a test function $\psi$. Summing up the resulting expressions and adding the sum to (\ref{EI2}), we obtain (\ref{REI2}).
We have proved Theorem \ref{thREI}.

\section{Weak--strong uniqueness}
\label{WS}

As the first application of Theorem \ref{thREI} we present a weak-strong uniqueness result. To this end, let us introduce the following notion of strong solution to the stochastic Navier-Stokes system.

\color{black}

\begin{Definition}\label{def:strsol}

Let $\StoB$ be a stochastic basis with a complete right-continuous filtration, let ${W}$ be an $\left\{ \mathfrak{F}_t \right\}_{t \geq 0} $-cylindrical Wiener process. A pair
 $(\varrho,\vu)$ and a stopping time $\mathfrak{t}$ is called a (local) strong solution system \eqref{eq1}--\eqref{eq3} provided
\begin{itemize}
\item the density $\varrho > 0$ $\mathbb{P}$-a.s., $t \mapsto \varrho(t, \cdot) \in W^{3,2}(\tor)$ is $\left\{ \mathfrak{F}_t \right\}_{t \geq 0}$-adapted,
\[
\expe{  \sup_{t \in [0,T]} \| \varrho (t, \cdot) \|_{W^{3,2} (\tor)}^p } < \infty \ \mbox{for all}\ 1 \leq p < \infty;
\]
\item the velocity $t \mapsto \vu (t, \cdot) \in W^{3,2}(\tor;R^3)$ is $\left\{ \mathfrak{F}_t \right\}_{t \geq 0}$-adapted and,
\[
\expe{ \sup_{t \in [0,T]} \| \vu (t, \cdot) \|_{W^{3,2} (\tor;R^3)}^p } < \infty\  \mbox{for all}\ 1 \leq p < \infty;
\]
\item for all $t\in[0,T]$ there holds $\prst$-a.s.
\[
\begin{split}
\varrho (t\wedge\mathfrak{t}) &= \varrho(0) -  \int_0^{t \wedge \mathfrak{t}} \Div(\varrho\vu ) \ \dt \\
(\varrho \vu) (t\wedge\mathfrak{t})  &= (\varrho \vu) (0) - \int_0^{t \wedge \mathfrak{t}} \Div (\varrho\vu \otimes\vu ) \ \dt \\
& + \int_0^{t \wedge \mathfrak{t}} \Div \tn{S}(\Grad \vu) \ \dt
- \int_0^{t \wedge \mathfrak{t}}\Grad p(\varrho)\ \dt + \int_0^{t \wedge \mathfrak{t}} {\tn{G}}(\varrho,\varrho\vu ) \,\Dif W.
\end{split}
\]
\end{itemize}
\end{Definition}

\color{black}

\begin{Remark} \label{RR3}

To the best of our knowledge, there is no existence results for the stochastic compressible Navier-Stokes system in the class of strong solutions. The regularity hypotheses imposed in Definition \ref{def:strsol} are inspired by the deterministic case studied by Valli \cite{Vall1} and Valli, Zajaczkowski \cite{VAZA}.

\end{Remark}

\subsection{Pathwise weak--strong uniqueness}

\color{black}

We claim the following pathwise variant of the weak--strong uniqueness principle.

%
%
%
%


\begin{Theorem}\label{thm:uniq}
The pathwise weak-strong uniqueness holds true for system \eqref{eq1}--\eqref{eq3} in the following sense: let $\left[ (\Omega,\mf,(\mf_t),\prst),\varrho,\bfu , W \right]$ be a dissipative martingale solution to system \eqref{eq1}--\eqref{eq3} and let $(\tilde \varrho, \tilde\bfu)$ and a stopping time
$\mathfrak{t}$ be a strong solution of the same problem defined on the same stochastic basis with the same Wiener process and with the initial data
\[
\tilde \varrho(0, \cdot)  = \varrho(0, \cdot), \quad \tilde \varrho(0, \cdot) \tilde \vu (0, \cdot) = (\varrho \bfu) (0, \cdot) \quad \mbox{$\mathbb{P}$-a.s.},
\]
\begin{equation} \label{hyp1}
\varrho(0, \cdot) \geq \underline{\vr} > 0 \quad \mathbb{P}\mbox{-a.s.}
\end{equation}
Then $\varrho (\cdot\wedge \mathfrak{t}) = \tilde \varrho (\cdot \wedge \mathfrak{t})$ and $\varrho \bfu (\cdot \wedge \mathfrak{t}) =
\tilde \varrho \tilde{\bf u} (\cdot \wedge \mathfrak{t})$ a.s.

\end{Theorem}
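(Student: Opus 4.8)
The plan is to deduce pathwise weak--strong uniqueness from the relative energy inequality \eqref{REI2+} by choosing the test processes $[r,\vc U]$ to be precisely the strong solution $[\tilde\vr,\tilde\vu]$, and then using a Gronwall argument on the relative energy $\mathcal E(\vr,\vu\,|\,\tilde\vr,\tilde\vu)$. Because $[\tilde\vr,\tilde\vu]$ is a strong solution on the random interval $[0,\mathfrak t)$, it satisfies \eqref{difer} with drift and diffusion coefficients read off from the strong form of \eqref{eq1}--\eqref{eq3}: $\DD\tilde\vr=-\Div(\tilde\vr\tilde\vu)$, $\DS\tilde\vr=0$, $\DD\tilde\vu$ equals the velocity equation written in nonconservative form, and $\DS\tilde\vu(e_k)=\vc G_k(\tilde\vr,\tilde\vr\tilde\vu)/\tilde\vr$. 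First I would verify that, thanks to the $W^{3,2}$-regularity and the lower bound $\tilde\vr>0$ guaranteed by Definition \ref{def:strsol} together with \eqref{hyp1}, these coefficients satisfy the integrability and regularity hypotheses required in Theorem \ref{thREI}; the uniform positivity of $\tilde\vr$ on $[0,\mathfrak t)$ supplies hypothesis \eqref{bound}, possibly after the stopping-time truncation described in Remark \ref{Rem2}.

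The key computation is to simplify the remainder $\mathcal R(\vr,\vu\,|\,\tilde\vr,\tilde\vu)$ defined in \eqref{rem} under this special choice. With $\DS\tilde\vr=0$ the last two lines of \eqref{rem} collapse, and the stochastic-diffusion term $\tfrac12\sum_k\intTor{\vr|\vc G_k(\vr,\vr\vu)/\vr-\DS\vc U(e_k)|^2}$ becomes $\tfrac12\sum_k\intTor{\vr|\vc G_k(\vr,\vr\vu)/\vr-\vc G_k(\tilde\vr,\tilde\vr\tilde\vu)/\tilde\vr|^2}$, which by the Lipschitz-type bounds \eqref{FG1}--\eqref{FG2} is controlled by $\mathcal E(\vr,\vu\,|\,\tilde\vr,\tilde\vu)$. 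Substituting the strong-solution drifts $\DD\tilde\vr$ and $\DD\tilde\vu$ into the remaining deterministic terms and integrating by parts, the expression should reorganize so that every term is either manifestly nonpositive when combined with the viscous dissipation $\intTor{(\mathbb S(\Grad\vu)-\tn S(\Grad\tilde\vu)):(\Grad\vu-\Grad\tilde\vu)}$ on the left of \eqref{REI2+}, or bounded by $C(t)\,\mathcal E(\vr,\vu\,|\,\tilde\vr,\tilde\vu)$ with $C$ depending on the $W^{3,2}$-norms of $[\tilde\vr,\tilde\vu]$. This is exactly the algebraic cancellation familiar from the deterministic theory of \cite{FENOSU}, now carried out with the Itô corrections already absorbed into \eqref{rem}.

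Having reduced \eqref{REI2+} to
\[
\mathcal E(t\wedge\mathfrak t)+\int_0^{t\wedge\mathfrak t}\!\!\mathcal D\,\dd r\le M_{RE}(t\wedge\mathfrak t)+\int_0^{t\wedge\mathfrak t}\!\!C(r)\,\mathcal E\,\dd r,
\]
where $\mathcal D\ge0$ is the viscous term and the initial value vanishes because $\tilde\vr(0)=\vr(0)$, $\tilde\vr\tilde\vu(0)=(\vr\vu)(0)$, I would take expectations to eliminate the martingale $M_{RE}$ (whose square-integrability is part of Theorem \ref{thREI}) and apply the stochastic Gronwall lemma to conclude $\E[\mathcal E(t\wedge\mathfrak t)]=0$ for every $t$. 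Since $\mathcal E$ is nonnegative and, under \eqref{hyp1} together with the positivity of $\tilde\vr$, controls $\|\sqrt\vr(\vu-\tilde\vu)\|_{L^2}^2$ and $\|\vr-\tilde\vr\|_{L^2}^2$ from below, this forces $\vr(\cdot\wedge\mathfrak t)=\tilde\vr(\cdot\wedge\mathfrak t)$ and $\vr\vu(\cdot\wedge\mathfrak t)=\tilde\vr\tilde\vu(\cdot\wedge\mathfrak t)$ almost surely.

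The main obstacle I anticipate is not the Gronwall step but the rigorous justification that the strong solution is an admissible test process in Theorem \ref{thREI} up to the random time $\mathfrak t$: the relative energy inequality is stated for processes adapted to the filtration and globally bounded below, whereas $[\tilde\vr,\tilde\vu]$ is only defined on $[0,\mathfrak t)$ and its lower density bound is random. I would handle this by working on the stopped interval, using the truncation of Remark \ref{Rem2} to enforce \eqref{bound} with deterministic constants $\underline r,\Ov r$, and verifying that the strong-solution PDE supplies the decomposition \eqref{difer} with coefficients in the precise spaces demanded by Theorem \ref{thREI}; checking that the $W^{3,2}$-regularity propagates through $\Grad H'(\tilde\vr)$ and $\tilde\vu\cdot\Grad\tilde\vu$ to meet the $W^{1,q}$-integrability of $\DD\tilde\vr,\DD\tilde\vu$ is the delicate bookkeeping.
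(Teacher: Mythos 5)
Your proposal is correct and follows essentially the same route as the paper: take $r=\tilde\vr$, $\vc U=\tilde\vu$ (so $\DS r=0$ and $\DS\vc U(e_k)=\vc G_k(\tilde\vr,\tilde\vr\tilde\vu)/\tilde\vr$), localize with a stopping time to obtain uniform two-sided bounds on $\tilde\vr$ and an $L^\infty$ bound on $\Grad\tilde\vu$, reduce the remainder to terms of order $\mathcal E$, take expectations to remove the martingale, and conclude by Gronwall and the coercivity of $\mathcal E$. The one step you compress is the bound $\tfrac12\sum_k\intTor{\vr\left|\vc G_k(\vr,\vr\vu)/\vr-\vc G_k(\tilde\vr,\tilde\vr\tilde\vu)/\tilde\vr\right|^2}\le c(M)\,\mathcal E$, which does not follow from a plain Lipschitz estimate because of the division by $\vr$; the paper proves it by splitting into the regions $\{\vr\le\tilde\vr/2\}$, $\{\tilde\vr/2\le\vr\le2\tilde\vr\}$, $\{\vr\ge2\tilde\vr\}$ and invoking, on the outer regions, the residual coercivity $H(\vr)-H'(\tilde\vr)(\vr-\tilde\vr)-H(\tilde\vr)\gtrsim 1+\vr^\gamma$ from \eqref{coerc}.
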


\color{black}

{\bf Proof of Theorem \ref{thm:uniq}:}

\noindent
{\bf Step 1:}

\medskip

We start by introducing a stopping time
\[
\tau_M = \inf \{ t \in (0,T) \ \Big| \  \ \left\| \tilde{\vu} (s, \cdot) \|_{W^{3,2}(\tor; R^3)} > M \right\}.
\]
As $(\tilde \varrho, \tilde \vu)$ is a strong solution,
\[
\mathbb{P} \left[ \lim_{M \to \infty} \tau_M = T \right] = 1;
\]
whence it is enough to show the result for a fixed $M$.

\medskip

\noindent
{\bf Step 2:}

\medskip

Given $M > 0$, we get, as a direct consequence of the embedding relation $W^{2,2}(\tor) \hookrightarrow C(\tor)$,
\[
\sup_{t \in [0, \tau_M]} \| \Grad \tilde{\vu} \|_{L^\infty(\tor; R^{3 \times 3})} \leq c(M),
\]
and, as $\tilde{\varrho}$ satisfies the equation of continuity on the time interval $[0, \mathfrak{t}]$
\color{black} and hypothesis (\ref{hyp1}),
\[
0 < \underline{\varrho}_M \leq \tilde \varrho (t \wedge \mathfrak{t}) \leq \overline{\varrho}_M \ \mbox{for}\ t \in [0, \tau_M].
\]

Next, it is easy to check that for any $\delta > 0$ (small enough)
\begin{equation} \label{coerc}
H(\varrho) - H'(r)(r)(\varrho - r) - H(r) \geq c(\delta) \left\{ \begin{array}{l} |\varrho - r|^2 \ \mbox{for any} \delta < r, \varrho  < \delta^{-1},
\\ \\
1 + \varrho^\gamma \ \mbox{whenever}\ \delta < r  < \delta^{-1}, \ \varrho \in (0, \infty) \setminus [ \delta/2, 2 \delta]. \end{array} \right.
\end{equation}
This motivates
the following definition. For
\[
\Phi_M \in C^\infty_0(0, \infty), \ 0 \leq \Phi_M \leq 1, \ \Phi(r) = r \ \mbox{for all}\ r \in [\underline{\varrho}_M/2, 2 \overline{\varrho}_M ],
\]
we introduce
\[
[h]_{\rm ess} = \Phi_M (\varrho) h ,\ [h]_{\rm res} = h - \Phi_M (\varrho) h \ \mbox{for any}\ h \in L^1(\Omega \times (0,T) \times \tor).
\]

It follows from (\ref{coerc}) that
\begin{equation} \label{coercess}
\mathcal{E} \left( \varrho, \ \vu \ \Big| \tilde \varrho, \ \tilde \vu \right)  \geq
c(M) \left[ \left\| \left[ \vu - \tilde \vu \right]_{\rm ess} \right\|^2_{L^2(\tor;R^3)} +
\left\| \left[ \varrho - \tilde \varrho \right]_{\rm ess} \right\|^2_{L^2(\tor)} \right],
\end{equation}
and similarly
\begin{equation} \label{coercres}
\mathcal{E} \left( \varrho, \ \vu \ \Big| \tilde \varrho, \ \tilde \vu \right)  \geq
c(M) \left[ \left\| \sqrt{\varrho} \left[ \vu - \tilde \vu \right]_{\rm res} \right\|^2_{L^2(\tor;R^3)} +
\left\| \left[ 1 + \varrho^\gamma \right]_{\rm res} \right\|_{L^1(\tor)} \right].
\end{equation}
whenever $t \in [0, \tau_M]$.

\medskip

\noindent
{\bf Step 3:}

\medskip

Our goal now is to apply the relative energy inequality (\ref{REI2}) to $r = \tilde \varrho$, ${\bf U} = \tilde \vu$ on the time interval
$[0, \tau_M \wedge \mathfrak{t} ]$. To this end, we compute
\[
{\rm d} \tilde \vu = {\rm d} \left( \frac{\tilde \varrho \tilde \vu}{\tilde \varrho} \right) = \frac{1}{\tilde \varrho} {\rm d} (\tilde \varrho
\tilde \vu) - \frac{\partial_t \tilde \varrho}{\tilde \varrho} \tilde \vu \ {\rm d}t;
\]
whence we can deduce from (\ref{REI2}) that
\begin{equation}
\label{REI1}
\begin{split}
\mathcal{E} \left( \varrho, \vu \Big| \tilde \varrho, \tilde \vu \right)(t \wedge \tau_M \wedge \mathfrak{t})  &+ \int_0^{t \wedge \tau_M\wedge \mathfrak{t}} \intTor{
\left( \tn{S} (\nabla \vu) - \tn{S} (\Grad  \tilde \vu) \right)
: \left( \Grad \vu - \Grad \tilde \vu \right) } \ {\rm d}s \\
&\leq M(t \wedge \tau_M \wedge \mathfrak{t}) - M(0) + \int_0^{t \wedge \tau_M \wedge \mathfrak{t}} \mathcal{R} \left( \varrho, \vu \Big| \tilde \varrho , \tilde \vu \right) \ {\rm d}t,
\end{split}
\end{equation}
\color{black}
with
\begin{align}
\mathcal{R}  \left( \varrho, \vu \Big| \tilde \varrho , \tilde \vu \right) &=\intTor{ \tn{S} (\Grad \tilde \vu):(\Grad \tilde \vu-\Grad \vu ) }
\nonumber\\
& - \intTor{ \frac{\varrho}{\tilde \varrho} \Big( \partial_t \tilde \varrho \tilde \vu + \Div (\tilde \varrho  \tilde \vu \otimes
\tilde \vu ) \Big) \cdot (\tilde \vu -\vu) }\nonumber \\
&+\intTor{ \varrho \vu \cdot\Grad \tilde \vu (\tilde \vu -\vu) } + \intTor{ \frac{\varrho}{\tilde \varrho} \Big( \Div \tn{S} ( \Grad \tilde \vu) -
\Grad p(\tilde \varrho) \Big) \cdot (\tilde \vu -\vu )}\nonumber \\
&+\intTor{ \big((\tilde{\varrho} -\varrho)H''(\tilde \varrho) \partial_t \tilde \varrho +\Grad H'(\tilde \varrho)(\tilde \varrho \tilde \vu-
\varrho \vu)\big)} -\intTor{ \Div\tilde \vu (p(\varrho)-p(\tilde \varrho)) } \nonumber\\
&+\frac{1}{2}\sum_{k\geq1} \intTor{  \varrho \Big|\frac{1}{{\varrho}} \vc{G}_k (\varrho,\varrho\vu) - \frac{1}{\tilde \varrho}
 \vc{G}_k (\tilde \varrho, \tilde \varrho \tilde \vu )\Big|^2 }\nonumber\\
&=\intTor{ \frac{1}{\tilde \varrho} (\varrho - \tilde \varrho) \Div \tn{S} (\Grad \tilde \vu) \cdot (\tilde \vu- \vu ) }\nonumber
\\
&+\intTor{ \varrho ( \vu - \tilde \vu) \cdot\Grad \tilde \vu \cdot (\tilde \vu -\vu) } - \intTor{ \frac{\varrho}{\tilde \varrho}
\Grad p(\tilde \varrho) \cdot (\tilde \vu -\vu) } \nonumber\\
&+\intTor{ \big((\tilde{\varrho} -\varrho)H''(\tilde \varrho) \partial_t \tilde \varrho +\Grad H'(\tilde \varrho)(\tilde \varrho \tilde \vu-
\varrho \vu)\big)} -\intTor{ \Div \tilde \vu (p(\varrho)-p(\tilde \varrho)) } \nonumber\\
&+\frac{1}{2} \sum_{k\geq1}\intTor{  \varrho \Big|\frac{1}{{\varrho}} \vc{G}_k (\varrho,\varrho\vu) - \frac{1}{\tilde \varrho}
 \vc{G}_k (\tilde \varrho, \tilde \varrho \tilde \vu )\Big|^2 }\nonumber\\
&=\intTor{ \frac{1}{\tilde \varrho} (\varrho - \tilde \varrho) \Div \tn{S} (\nabla\tilde \vu ) \cdot (\tilde \vu- \vu ) }
+\intTor{ \varrho ( \vu - \tilde \vu) \cdot\Grad \tilde \vu \cdot (\tilde \vu -\vu) } \nonumber\\
&-\intTor{ \Div\tilde \vu \Big(p(\varrho)- p'(\tilde \varrho) (\varrho - \tilde \varrho ) - p(\tilde \varrho) \Big) }
+\frac{1}{2}\sum_{k\geq1}\intTor{ \varrho \Big|\frac{1}{{\varrho}}  \vc{G}_k (\varrho,\varrho\vu) - \frac{1}{\tilde \varrho}
 \vc{G}_k (\tilde \varrho, \tilde \varrho \tilde \vu)\Big|^2 }\nonumber\\
&=\mathscr T_1+\mathscr T_2+\mathscr T_3+\mathscr T_4.\label{rem1}
\end{align}
The goal is to estimate the terms $\mathscr T_1,...,\mathscr T_4$ and to absorb them in the left-hand-side of \eqref{REI1} via Gronwall's lemma. Repeating the estimates from \cite{FENOSU}, we deduce that
\begin{align}\label{eq:T12}
\mathscr T_1+\mathscr T_2+\mathscr T_3\leq c(M)\mathcal E\Big([\varrho,\bfu]\Big|[\tilde \varrho,\bfU]\Big).
\end{align}
Now we estimate the part arising from the correction term and decompose
\begin{align*}
\mathscr T_4&=\frac{1}{2} \,\sum_k\int_{\mt}\chi_{\varrho\leq\frac{\tilde \varrho}{2}}\varrho\Big(\frac{\vc{G}_k(\varrho,\varrho\bfu)}{\varrho}-\frac {\vc{G}_k(\tilde \varrho,\tilde \varrho\bfU)}{\tilde \varrho}\Big)^2\dx\\
&+\frac{1}{2}\,\sum_k\int_{\mt}\chi_{\frac{\tilde \varrho}{2}\leq \varrho\leq 2\tilde \varrho}\varrho\Big(\frac{\vc{G}_k(\varrho,\varrho\bfu)}{\varrho}-\frac {\vc{G}_k(\tilde \varrho,\tilde \varrho\bfU)}{\tilde \varrho}\Big)^2\dx\\
&+\frac{1}{2}\,\sum_k\int_{\mt}\chi_{\varrho\geq 2\tilde \varrho}\varrho\Big(\frac{\vc{G}_k(\varrho,\varrho\bfu)}{\varrho}-\frac {\vc{G}_k(\tilde \varrho,\tilde \varrho\bfU)}{\tilde \varrho}\Big)^2\dx\\
&=\mathscr T_4^1+\mathscr T_4^2+\mathscr T_4^3.
\end{align*}

Using \eqref{FG1}, \eqref{FG2} and \eqref{coerc} there holds
\begin{align*}
\mathscr T_4^1&\leq \,c(M)\,\int_{\mt}\chi_{\varrho\leq\frac{\tilde \varrho}{2}}(1+\varrho|\bfu|^2+\varrho|\bfU|^2)\dx\\
&\leq \,c(M)\,\int_{\mt}\chi_{\varrho\leq\frac{\tilde \varrho}{2}}\dx+ \,c(M)\,\E\int_{\mt}\varrho|\bfu-\bfU|^2\dx\\
&\leq \,c(M)\,\int_{\mt}\chi_{\varrho\leq\frac{\tilde \varrho}{2}}\big(H(\varrho)-H'(\tilde \varrho)(\varrho-\tilde \varrho)-H(\tilde \varrho)\big)\dx+ \,c(M)\,\int_{\mt}\varrho|\bfu-\bfU|^2\dx\\
&\leq\,c(M)\,\mathcal E\Big([\varrho,\bfu]\Big|[\tilde \varrho,\bfU]\Big)\Big].
\end{align*}
Similarly we gain by \eqref{coerc} and the mean-value theorem
\begin{align*}
\mathscr T_4^2&\leq\,\frac{1}{2}\,\sum_{k\geq1}\int_{\mt}\chi_{\frac{\tilde \varrho}{2}\leq \varrho\leq 2\tilde \varrho}\varrho\Big(\frac{\vc{G}_k(\varrho,\varrho\bfu)}{\varrho}-\frac {\vc{G}_k(\tilde \varrho,\varrho\bfu)}{r}\Big)^2\dx\\&+\frac{1}{2}\,\sum_k\int_{\mt}\chi_{\frac{\tilde \varrho}{2}\leq \varrho\leq 2\tilde \varrho}\varrho\Big(\frac{\vc{G}_k(\tilde \varrho,\varrho\bfu)}{\tilde \varrho}-\frac {\vc{G}_k(\tilde \varrho,\tilde \varrho\bfU)}{\tilde \varrho}\Big)^2\dx\\
&\leq \,c(M)\,\int_{\mt}\chi_{\frac{\tilde \varrho}{2}\leq \varrho\leq 2\tilde \varrho}\Big(|\varrho-\tilde \varrho|^2(1+|\varrho\bfu|^2)+|\varrho\bfu-\tilde \varrho\bfU|^2\Big)\dx\\
&\leq \,c(M)\,\int_{\mt}\chi_{\frac{\tilde \varrho}{2}\leq \varrho\leq 2\tilde \varrho}\Big(|\varrho-\tilde \varrho|^2(1+|\bfU|^2)+|\varrho(\bfu-\bfU)|^2\Big)\dx\\
&\leq\,c(M)\,\int_{\mt}\chi_{\frac{\tilde \varrho}{2}\leq \varrho\leq 2r}|\varrho-\tilde \varrho|^2\dx+\,\int_{\mt}\varrho|\bfu-\bfU|^2\dx\\
&\leq \,c(M)\,\int_{\mt}\big(H(\varrho)-H'(\tilde \varrho)(\varrho-\tilde \varrho)-H(\tilde \varrho)\big)\dx+\,\mathcal E\Big([\varrho,\bfu]\Big|[\tilde \varrho,\bfU]\Big)\\
&\leq\,c(M)\,\mathcal E\Big([\varrho,\bfu]\Big|[\tilde \varrho,\bfU]\Big)
\end{align*}
Finally, \eqref{coerc} yields
\begin{align*}
\mathscr T_4^3&\leq \,c(M)\,\int_{\mt}\chi_{ \varrho\geq 2\tilde\varrho}\Big(\varrho+\varrho|\bfu|^2+\varrho|\bfU|^2\Big)\dx\\
&\leq \,c(M)\,\int_{\mt}\chi_{ \varrho\geq 2\tilde \varrho}\Big(\varrho+\varrho|\bfu-\bfU|^2+\varrho|\bfU|^2\Big)\dx\\
&\leq \,c(M)\,\int_{\mt}\chi_{ \varrho\geq 2\tilde \varrho}\Big(\varrho^\gamma(1+|\bfU|^2)+\varrho|\bfu-\bfU|^2\Big)\dx\\
&\leq \,c(M)\,\int_{\mt}\big(H(\varrho)-H'(\tilde \varrho)(\varrho-r)-H(r)\big)\dx+\,\mathcal E\Big([\varrho,\bfu]\Big|[\tilde \varrho,\bfU]\Big)\\
&\leq\,c(M)\,\mathcal E\Big([\varrho,\bfu]\Big|[\tilde \varrho,\bfU]\Big).
\end{align*}

Plugging everything together we deduce that
\[
\mathcal{E} \left( \varrho, \vu \Big| \tilde \varrho, \tilde \vu \right)(t \wedge \tau_M \wedge \mathfrak{t})
\leq M(t \wedge \tau_M \wedge \mathfrak{t}) - M(0) + c(M) \int_0^{t \wedge \tau_M \wedge \mathfrak{t}} \mathcal{E} \left( \varrho, \vu \Big| \tilde \varrho , \tilde \vu \right) \ {\rm d}t.
\]
Averaging over $\Omega$ and applying Gronwall's lemma we conclude the proof.

\qed

\subsection{Weak--strong uniqueness in law}

Strictly speaking, the strong and weak martingale solutions of problem (\ref{eq1}--\ref{eq3}) may not be defined on the same
probability space and with the same Wiener process $W$. As a consequence of Theorem \ref{thm:uniq} we obtain the weak-strong uniqueness in law.

\begin{Theorem}\label{thm:uniqlaw}
The weak-strong uniqueness in law holds true. That is, if
$$\left[ (\Omega^1,\mf^1,(\mf^{1}_{t}),\prst^1), \vr^1 ,\vu^1 , W^1 \right]$$
is a dissipative martingale solution to system \eqref{eq1}--\eqref{eq3} and
$$\left[ (\Omega^2,\mf^2,(\mf^{2}_{t}),\prst^2), \vr^2 ,\vu^2 , W^2\right]$$
is a strong martingale solution of the same problem such that
$$\Lambda=\prst^1\circ (\varrho^1(0),\varrho^1\bfu^1(0))^{-1}=\prst^2\circ (\varrho^2(0),\varrho^2\bfu^2(0))^{-1},$$
then
\begin{equation}\label{law}
\prst^1\circ(\varrho^1,\varrho^1\bfu^1)^{-1}=\prst^2\circ(\varrho^2,\varrho^2\bfu^2)^{-1}.
\end{equation}
\end{Theorem}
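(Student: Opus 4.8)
The plan is to run a Yamada--Watanabe type argument: from the \emph{pathwise} weak--strong uniqueness of Theorem \ref{thm:uniq} I deduce uniqueness in law by transporting the two solutions, which a priori live on different stochastic bases, onto a single probability space carrying one common Wiener process and one common initial datum. First I fix suitable (sub-)Polish path spaces $\mathcal{X}$ for the pair $(\vr,\vr\bfu)$ and $\mathcal{X}_W=C([0,T];\mathfrak{U}_0)$ for the driving noise, and record the joint laws $\mu^i=\prst^i\circ(\vr^i,\vr^i\bfu^i,W^i)^{-1}$ on $\mathcal{X}\times\mathcal{X}_W$, $i=1,2$. Since each $W^i$ is a cylindrical Wiener process independent of the $\mathfrak F_0$-measurable initial datum $Z^i=(\vr^i(0),\vr^i\bfu^i(0))$, the joint law of $(Z^i,W^i)$ equals $\Lambda\otimes\prst_W$ for both $i$, where $\prst_W$ denotes the Wiener measure; this common marginal is what makes the gluing possible.

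Next I disintegrate each $\mu^i$ with respect to the pair (initial datum, noise path): writing $z=x(0)$ for the initial value carried by a path $x\in\mathcal X$ and $w\in\mathcal X_W$ for the noise, regular conditional probabilities give
\begin{equation*}
\mu^i(\mathrm dx,\mathrm dw)=K^i\big((z,w);\mathrm dx\big)\,\Lambda(\mathrm dz)\,\prst_W(\mathrm dw),
\end{equation*}
with $K^i((z,w);\cdot)$ supported on paths $x$ having $x(0)=z$. I then form the glued measure on $\mathcal X\times\mathcal X\times\mathcal X_W$,
\begin{equation*}
\nu(\mathrm dx_1,\mathrm dx_2,\mathrm dw)=K^1\big((z,w);\mathrm dx_1\big)\,K^2\big((z,w);\mathrm dx_2\big)\,\Lambda(\mathrm dz)\,\prst_W(\mathrm dw),
\end{equation*}
and take $\tilde\Omega=\mathcal X\times\mathcal X\times\mathcal X_W$ with canonical projections $(\tilde\vr^1,\widetilde{\vr^1\bfu^1})$, $(\tilde\vr^2,\widetilde{\vr^2\bfu^2})$, $\tilde W$ and the augmented right-continuous canonical filtration. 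By construction the two reconstructed solutions share the same initial datum $z$ and the same noise $\tilde W$, while the $x_1$- and $x_2$-marginals of $\nu$ reproduce the original laws of $(\vr^1,\vr^1\bfu^1)$ and $(\vr^2,\vr^2\bfu^2)$.

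The main obstacle is to verify that on $(\tilde\Omega,\nu)$ the process $\tilde W$ is still a cylindrical $(\tilde{\mathfrak F}_t)$-Wiener process and that each reconstructed pair is again a solution of the \emph{same type} (dissipative martingale solution, resp.\ strong solution) driven by $\tilde W$ from the datum $z$. This is the measure-theoretic heart of the argument: being a solution must be shown to be a property of the joint law alone, hence preserved by the gluing. Concretely, I encode the momentum equation, the energy inequality \eqref{EI2}, and the regularity and adaptedness demanded by Definitions \ref{DM1} and \ref{def:strsol} as the almost-sure vanishing (resp.\ the martingale property) of explicit functionals of the trajectory and the noise; since these functionals have the same law under $\nu$ as under the original $\prst^i$, they continue to vanish (resp.\ stay martingales), using that the kernels $K^i$ are non-anticipating so that adaptedness to the common filtration — and thus the independence of the increments of $\tilde W$ from the past — is retained. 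A Jakubowski/Skorokhod representation for the sub-Polish spaces involved together with the characterisation of the stochastic integral through its quadratic-variation identity are the technical tools here.

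Once both reconstructed objects are recognised as a dissipative martingale solution and a strong solution on the \emph{same} stochastic basis, driven by the \emph{same} Wiener process $\tilde W$ and issuing from the \emph{same} initial datum (recall $\tilde\vr^1(0)=z=\tilde\vr^2(0)$, and likewise for the momenta, $\nu$-a.s.), Theorem \ref{thm:uniq} applies and yields $\tilde\vr^1=\tilde\vr^2$ and $\widetilde{\vr^1\bfu^1}=\widetilde{\vr^2\bfu^2}$ $\nu$-a.s.\ on $[0,T]$ (reading the strong solution as global; for a merely local one the identity holds up to the transferred stopping time). Equality of the $x_1$- and $x_2$-components $\nu$-a.s.\ forces equality of their laws, and since these marginals are precisely the two laws in \eqref{law}, the proof is complete.
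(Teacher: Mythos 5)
Your overall strategy is the paper's: disintegrate each joint law with respect to the common marginal $\Lambda\otimes\prst_W$ of (initial datum, noise), glue the two regular conditional kernels into one measure on a product path space, check that the glued canonical processes are again a dissipative and a strong solution driven by one Wiener process from one initial datum, and invoke Theorem \ref{thm:uniq}. Two points, however, are not merely technical and the second is a genuine gap. First, your path space $\mathcal X$ carries only $(\vr,\vr\bfu)$; the paper also carries the velocity $\bfu$ as a separate coordinate in $L^2(0,T;W^{1,2}_x)$, which is necessary because $\bfu$ is not a measurable functional of $(\vr,\vr\bfu)$ on the vacuum set and both the weak formulation and the dissipation term $\mathbb S(\nabla\bfu):\nabla\bfu$ require it.

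Second, and more seriously, you claim that being a dissipative martingale solution can be ``encoded as the almost-sure vanishing (resp.\ the martingale property) of explicit functionals of the trajectory and the noise.'' For the momentum equation this is fine, but the energy inequality \eqref{EI2} is \emph{not} such a functional: it involves the abstract martingale $M_E$, whose existence is part of Definition \ref{D2} and which is not given as an explicit function of $(\vr,\vr\bfu,\bfu,W)$. If $M_E$ is not part of the joint law being glued, there is no way to assert that the transported process still satisfies \eqref{EI2} on the new space, and without \eqref{EI2} (in the form \eqref{EI2+} of Remark \ref{first}) Theorem \ref{thm:uniq} cannot be applied. This is exactly why the paper enlarges the path space $\Theta$ by a coordinate $m\in C([0,T];\mr)$, sets $M^1$ equal to the energy martingale of the dissipative solution and $M^2\equiv 0$, and transfers the inequality by equality of the joint laws of $(\vr,\vr\bfu,\bfu,M)$. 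Relatedly, your appeal to ``Jakubowski/Skorokhod representation'' is misplaced here: what is actually needed to disintegrate on the non-Polish spaces $C_w([0,T];L^\gamma_x)$ is the existence of regular conditional probabilities, which the paper secures by showing $(\Theta,\mathcal B_T(\Theta))$ is a Radon space.
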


\begin{proof}
The proof is based on the ideas of the classical result of Yamada--Watanabe for SDEs as presented for instance in \cite[Proposition 3.20]{karatzas}, however, we need to face several substantial difficulties that originate in the complicated structure of system \eqref{eq1}--\eqref{eq3}.

Let $R^1:=\varrho^1-\varrho^1(0)$, $R^2:=\varrho^2-\varrho^2(0)$, $\bfQ^1:= \varrho^1\vu^1-(\varrho^1\vu^1)(0)$, $\bfQ^2:=\vr^2\vu^2-(\varrho^2\bfu^2)(0)$. Let $M^1$ be the real-valued martingale from the energy inequality \eqref{EI2} of the dissipative solution $(\varrho^1,\varrho^1\bfu^1)$ and let $M^2\equiv0$. Set
\begin{align*}
\Theta:&=L^\gamma_x\times L^{\frac{2\gamma}{\gamma+1}}_x\times C([0,T];\mathfrak{U}_0)\times C([0,T];\mr)\\
&\qquad\times C_w([0,T]; L^\gamma_x)\times  C_w([0,T];L^\frac{2\gamma}{\gamma+1}_x)\times L^2(0,T;W^{1,2}_x)
\end{align*}
We denote by $\theta=(r_0,\bfq_0,w,m,r,\bfq,\bfv)$ a generic element of $\Theta$. Let $\mathcal{B}_T(\Theta)$ denote the $\sigma$-field on $\Theta$ given by
\begin{align*}
\mathcal{B}_T(\Theta):&=\mathcal{B}(L^\gamma_x)\otimes\mathcal{B}\big(L^\frac{2\gamma}{\gamma+1}_x\big)\otimes\mathcal{B}\big(C([0,T];\mathfrak{U}_0)\big)\otimes \mathcal{B}(C([0,T];\mr))\\
&\qquad\otimes\mathcal{B}_T\big(C_w([0,T];L^\gamma_x)\big)\otimes\mathcal{B}_T\big(C_w([0,T];L^\frac{2\gamma}{\gamma+1}_x)\big)\otimes\mathcal{B}(L^2(0,T;W^{1,2}_x)),
\end{align*}
where for a separable Banach space $X$ we denote by $\mathcal{B}(X)$ its Borel $\sigma$-field and by $\mathcal{B}_T(C_w([0,T];X))$ the $\sigma$-field generated by the mappings
$$C_w([0,T];X)\to X,\quad h\mapsto h(s),\qquad s\in [0,T].$$
The discussion in \cite[Section 3]{onsebr} shows that $(C_w([0,T];X),\mathcal{B}_T(C_w([0,T];X))$ is a Radon space, i.e. every probability measure on $(C_w([0,T];X),\mathcal{B}_T(C_w([0,T];X))$ is Radon. Since the same is true for any Polish space equipped with the Borel $\sigma$-field and since the topological product of a countable collection of Radon spaces is a Radon space, we deduce that $(\Theta,\mathcal{B}_T(\Theta))$ is a Radon space. Due to \cite[Theorem 3.2]{LFR}, every Radon space enjoys the regular conditional probability property. Namely, if $P$ is a probability measure on $(\Theta,\mathcal{B}_T(\Theta))$, $(E,\mathcal{E})$ is a measurable space and
$$T:(\Theta,\mathcal{B}_T(\Theta),P)\to (E,\mathcal{E})$$
is a measurable mapping, then there exists a regular conditional
probability with respect to $T$: that is, there exists is a function $K : E \times\mathcal{B}_T(\Theta)\to[0, 1]$, called a transition probability, such that
\begin{enumerate}
\item[(i)] $K(x,\cdot)$ is a probability measure on $\mathcal{B}_T(\Theta)$, for all $x\in E$,
\item[(ii)] $K(\cdot, A)$ is a measurable function on $(E,\mathcal{E})$, for all $A\in\mathcal{B}_T(\Theta)$,
\item[(iii)] for all $A\in \mathcal{B}_T(\Theta)$ and all $ B\in \mathcal{E}$ it holds true
$$P\big(A\cap T^{-1}(B)\big)=\int_B K(x,A)\,(T_\ast P)(\dif x),$$
where $T_\ast P$ denotes the pushforward measure on $(E,\mathcal{E})$.
\end{enumerate}

Let $j\in\{1,2\}$ and let $\mu^j$ denote the joint law of $(\vr^j(0),(\vr^j\vu^j)(0),W^j,M^j,R^j,\bfQ^j,\bfu^j)$ on $\Theta$, let $\p^W$ be the Wiener measure on $C([0,T];\mathfrak{U}_0)$ which also coincides with the projection to $w$ of $\mu^j$. The law of $(r_0,\bfq_0)$ is $\Lambda$ and the law of $(r_0,\bfq_0,w)$ is the product measure $\Lambda\otimes\p^W$ since $(\varrho^j(0),(\varrho^j\bfu^j)(0))$ is $\mf^j_0$-measurable and $W^j$ is independent of $\mf^j_0$. Furthermore,
$$\mu^j\big[(r(0),\bfq(0))=0\big]=1.$$

Now, we have all in hand to bring the two solutions $(\varrho^1,\bfu^1,W^1)$ and $(\varrho^2,\bfu^2,W^2)$ to the same probability space while preserving their joint laws. To this end, we recall that on $(\Theta,\mathcal{B}_T(\Theta),\mu^j)$ there exists a regular conditional probability with respect to $(r_0,\bfq_0,w)$, denoted by $K^j$.
Besides, since $\Theta$ is a product space and $(r_0,\bfq_0,w)$ is the projection to the first three coordinates, we may regard $K^j$ as a function on
\begin{align*}
&\Big[L^\gamma_x\times L^\frac{2\gamma}{\gamma+1}_x\times C([0,T];\mathfrak{U}_0)\Big]\\
&\quad\times\Big[\mathcal{B}(C([0,T];\mr))\otimes\mathcal{B}_T\big(C_w([0,T];L^\gamma_x)\big)\otimes\mathcal{B}_T\big(C_w([0,T];L^\frac{2\gamma}{\gamma+1}_x)\big)\otimes\mathcal{B}(L^2(0,T;W^{1,2}_x))\Big]
\end{align*}
and the property (iii) above rewrites as follows: let
$$A_1\in \mathcal{B}(L^\gamma_x)\otimes\mathcal{B}(L^\frac{2\gamma}{\gamma+1}_x)\otimes\mathcal{B}(C([0,T];\mathfrak{U}_0))$$
and
$$A_2\in \mathcal{B}(C([0,T];\mr))\otimes\mathcal{B}_T\big(C_w([0,T];L^\gamma_x)\big)\otimes\mathcal{B}_T\big(C_w([0,T];L^\frac{2\gamma}{\gamma+1}_x)\big)\otimes\mathcal{B}(L^2(0,T;W^{1,2}_x)),$$
then
\begin{equation}\label{iii}
\mu^j\big[A_1\times A_2\big]=\int_{A_1}K^j(r_0,\bfq_0,w,A_2)\Lambda\big(\dd (r_0,\bfq_0)\big)\p^W(\dd w).
\end{equation}
Finally, we define
$$\Omega:=\Theta\times C([0,T];\mr)\times C_w([0,T]; L^\gamma_x)\times  C_w([0,T];L^\frac{2\gamma}{\gamma+1}_x)\times L^2(0,T;W^{1,2}_x) $$
and denote by $\mathfrak{F}$ the $\sigma$-field on $\Omega$ given as the completion of
$$\mathcal{B}_T(\Theta)\otimes \mathcal{B}(C([0,T];\mr))\otimes\mathcal{B}_T\big(C_w([0,T];L^\gamma_x)\big)\otimes\mathcal{B}_T\big(C_w([0,T];L^\frac{2\gamma}{\gamma+1}_x)\big)\otimes\mathcal{B}(L^2(0,T;W^{1,2}_x))$$
with respect to the probability measure
\begin{equation}\label{3.23}
\p(\dd \omega):=K^1\big(r_0,\bfq_0,w,\dd (m_1,r_1,\bfq_1,\bfv_1)\big)K^2\big(r_0,\bfq_0,w,\dd (m_2,r_2,\bfq_2,\bfv_2)\big)\Lambda\big(\dd (r_0,\bfq_0)\big)\p^W(\dd w),
\end{equation}
where we have denoted by $\omega=(r_0,\bfq_0,w,m_1,r_1,\bfq_1,\bfv_1,m_2,r_2,\bfq_2,\bfv_2)$ a canonical element of $\Omega$. In order to endow $(\Omega,\mathfrak{F},\p)$ with a filtration that satisfies the usual conditions, we take
$$\mathfrak G_t:=\sigma\big((r_0,\bfq_0,w(s),m_1(s),r_1(s),\bfq_1(s),\bfv_1(s),m_2(s),r_2(s),\bfq_2(s),\bfv_2(s));\, 0\leq s\leq t\big),$$
$$\tilde{\mathfrak{G}}_t:=\sigma\big(\mathfrak{G}_t\cup \{N;\,\p(N)=0\}\big),\qquad\mathfrak{F}_t:=\bigcap_{\varepsilon\in(0,T-t)}\tilde{\mathfrak{G}}_{t+\varepsilon},\quad t\in[0,T).$$
Then due to \eqref{3.23} and \eqref{iii} it follows that
\begin{align*}
\p\big[\omega\in\Omega;&\,(r_0,\bfq_0,w,m_j,r_j,\bfq_j,\bfv_j)\in A_1\times A_2\big]\\
&=\int_{A_1\times A_2} K^j\big(r_0,\bfq_0,w,\dd (m_j,r_j,\bfq_j,\bfv_j)\big)\Lambda\big(\dd (r_0,\bfq_0)\big)\p^W(\dd w)\\
&=\int_{A_1} K^j\big(r_0,\bfq_0,w,A_2\big)\Lambda\big(\dd (r_0,\bfq_0)\big)\p^W(\dd w)\\
&=\mu^j\big[A_1\times A_2\big]\\
&=\p^j\big[(\vr^j(0),(\vr^j\vu^j)(0),W^j,M^j,R^j,\bfQ^j,\bfu^j)\in A_1\times A_2\big]
\end{align*}
hence the law of $(r_0,\bfq_0,w,m_j,r_j,\bfq_j,\bfv_j)$ under $\p$ coincides with the law of
$$(\vr^j(0),(\vr^j\vu^j)(0),W^j,M^j,R^j,\bfQ^j,\bfu^j)$$
under $\p^j$ and, as a consequence, the law of $(r_0+r_j,\bfq_0+\bfq_j,\bfv_j,w,m_j)$ under $\p$ coincides with the law of $(\varrho^j,\varrho^j\bfu^j,\bfu^j,W^j,M^j)$ under $\p^j$. In particular, $w$ is an $(\mathfrak{F}_t)$-cylindrical Wiener process.

To summarize, we have defined a stochastic basis $(\Omega,\mathfrak{F},(\mathfrak{F}_t),\p)$ with random variables $(r_0+r_j,\bfq_0+\bfq_j,\bfv_j,w)$ that have the same law as the original solutions $(\varrho^j,\varrho^j\bfu^j,\bfu^j,W^j)$, $j=1,2$. As a consequence,
$$\p\big[\bfq_0+\bfq_j=(r_0+r_j)\bfv_j\big]=1$$
and $(r_0+r_j,\bfq_0+\bfq_j,\bfv_j,w)$ solves \eqref{eq1}--\eqref{eq3} in the weak sense. This can be verified for instance by the method of \cite[Proposition 4.11]{BrHo}. Besides, since the law of $(\varrho^2,\bfu^2)$ is actually supported on a space of functions with higher regularity (see Definition \ref{def:strsol}) and $\varrho^2>0$, we deduce that $(r_0+r_2,\bfv_2,w)$ is a strong solution to \eqref{eq1}--\eqref{eq3}.

By the same reasoning as in Remark \ref{first} we obtain the following version of the energy inequality \eqref{EI2} which holds true for all $0\leq t\leq T$,
a.a. $0 \leq s \leq t$ including $s= 0$
$\p^1$-a.s.
\begin{equation*} \label{EI2a}
\begin{split}
&
 \intTor{ \Big[ \frac{1}{2} \varrho^1 | {\bf u}^1 |^2 + H(\varrho^1) \Big](t) }
+ \int_s^t \intTor{  \mathbb{S} (\nabla {\bf u}^1): \nabla {\bf u}^1 } \ {\rm d}r \\
\leq & \intTor{ \Big[ \frac{| (\vr^1 \vu^1)(s) |^2 }{2 \varrho^1(s)}  + H(\varrho^1(s)) \Big] }
+ \frac{1}{2} \int_s^t
\bigg(
\intTor{ \sum_{k \geq 1} \frac{ | {\bf G}_k (\varrho^1, \varrho^1 {\bf u}^1) |^2 }{\varrho^1} } \bigg) {\rm d}r
\\ &+ M^1(t)-M^1(s)
\end{split}
\end{equation*}
hence the equality of joint laws of $(r_0+r_1,\bfq_0+\bfq_1,\bfv_1,m_1)$ and $(\varrho^1,\varrho^1\bfu^1,\bfu^1,M^1)$ implies the corresponding inequality satisfied by $(r_0+r_1,\bfq_0+\bfq_1,\bfv_1,m_1)$. Since in view of Remark \ref{first} this is exactly the version of \eqref{EI2} that is used in the proof of pathwise weak--strong uniqueness,
%
%
%
%
Theorem \ref{thm:uniq} then applies and yields
$$\p\big[r_0+r_1=r_0+r_2,\;\bfq_0+\bfq_1=\bfq_0+\bfq_1\big]=1$$
or equivalently
$$\p\big[\omega=(r_0,\bfq_0,\bfw,m_1,r_1,\bfq_1,\bfv_1,m_2,r_2,\bfq_2,\bfv_2)\in\Omega;\,r_1=r_2,\,\bfq_1=\bfq_2\big]=1.$$
Hence, for all $A\in\mathcal{B}_T(C_w([0,T];L^\gamma_x))\otimes\mathcal{B}_T(C_w([0,T];L^\frac{2\gamma}{\gamma+1}_x))$,
\begin{align*}
\p^1\big[(\varrho^1,\varrho^1\bfu^1)\in A\big]&=\p\big[\omega\in \Omega;\, (r_0+r_1,\bfq_0+\bfq_1)\in A\big]\\
&=\p\big[\omega\in \Omega;\, (r_0+r_2,\bfq_0+\bfq_2)\in A\big]\\
&=\p^2\big[(\varrho^2,\varrho^2\bfu^2)\in A\big]
\end{align*}
and \eqref{law} follows.

\end{proof}

\color{black}

\section{Incompressible-inviscid limit}
\label{II}

As the second application of the relative energy inequality, we examine the inviscid, incompressible limit for the system
\begin{eqnarray}
  \Dif \vr + \Div (\vr \vu) \ \dt  &=& 0 \label{seq1} \\
  \label{seq2} \Dif (\vr \vu) + \left[ \Div (\vr \vu \otimes \vu) + \frac{1}{\ep^2} \Grad p(\vr) \right] \Dif t &=& \Div \mathbb{S}_\ep(\Grad \vu) \ \dt + \mathbb{G}(\vr, \vr \vu) \,\Dif W \\
  \tn{S}_\ep (\Grad \vu) &=& \mu_\ep \left(\Grad \vu + \Grad^t \vu -  \frac{2}{3} \Div \vu \mathbb{I} \right) + \eta_\ep \Div \vu \tn{I},\label{seq3},
\end{eqnarray}
where
\[
\mu_\ep, \ \eta_\ep \to 0 \ \mbox{as}\ \ep \to 0.
\]

The scaling in (\ref{seq1}--\ref{seq3}) reflects the situation when the Mach number is low and the Reynolds number is high, meaning the fluid is in a highly turbulent almost incompressible regime, see e.g. Klein et al.  \cite{KBSMRMHS}.
Under these circumstances, the motion is expected to be governed by the incompressible Euler system
\begin{eqnarray}
  \Div \vc{v} &=& 0 \label{Eeq1} \\
  \label{Eeq2} \Dif \vc{v} + \left[ \vc{v} \cdot \Grad \vc{v} + \Grad \Pi \right] \Dif t &=& \mathbb{G}(1, \vc{v}) \,\Dif W.
\end{eqnarray}

To compare the primitive and limit systems, we need that
\begin{itemize}
\item
the Navier-Stokes system (\ref{seq1}--\ref{seq3}) possesses a dissipative martingale solution
\[
\left[ \StoB; \vr, \vu, W \right],
\]
and the Euler system (\ref{Eeq1}), (\ref{Eeq2}) a (strong) solution on the same probability space $\StoB$ and
with the same Wiener process $W$;
\item
both $\vc{v}$ and the pressure $\Grad \Pi$ are smooth enough in the $x-$variable so that
$r = 1$, $\vc{U} = \vc{v}$ can be taken as test functions in the relative energy inequality (\ref{REI2}).
\end{itemize}
We address these issue in the following two sections.

\subsection{Solutions of the Navier-Stokes system}
\label{sns}

Given the initial data
\[
\vr_{0,\ep} \in L^\gamma(\tor), \ (\vr \vu)_{0,\ep} \in L^{\frac{2 \gamma}{\gamma + 1}}(\tor; R^3),
\]
with the associated law $\Lambda_\ep$ satisfying the hypotheses of Theorem \ref{thm:exist}
problem (\ref{seq1}--\ref{seq3}) admits a dissipative martingale solution
\[
\left[ \left(\Omega^\ep, \mathfrak{F}^\ep,\left\{\mathfrak{F}_t^\ep \right\}_{t \geq 0},  \mathbb{P}^\ep \right), \vre, \vue, W_\ep  \right].
\]
In addition, in view of the representation
theorem of Jakubowski \cite{Jakub} and the way the weak solutions are being constructed in \cite{BrHo}, we may assume, without lost of generality, that stochastic basis $\StoB$
as well as the Wiener process
$W$ coincide for all $\ep > 0$.

\subsection{Solutions of the Euler system}

Assume that we are given the stochastic basis $\StoB$ and the Wiener process $W$ identified in the preceding section.
Similarly to Definition \ref{def:strsol}, we introduce the (local) strong solutions of the Euler system (\ref{Eeq1}--\ref{Eeq2}):

\begin{Definition}\label{def:strsolE}

Let $\StoB$ be a stochastic basis with a complete right-continuous filtration, let $W$ be an $\left\{ \mathfrak{F}_t \right\}_{t \geq 0} $-cylindrical Wiener process. A
stochastic process $\vc{v}$ with a stopping time $\mathfrak{t}$ is called a (local) strong solution
to the Euler system (\ref{Eeq1}), (\ref{Eeq2}) provided
\begin{itemize}
\item the velocity $\vc{v} \in C([0,T]; W^{3,2}(\tor; \mathbb{R}^3))$ $\mathbb{P}$-a.s. is $\left\{ \mathfrak{F}_t \right\}_{t \geq 0}$-adapted,
\[
\E\bigg[ \sup_{t \in [0,T]} \| \vc{v} (t, \cdot) \|_{W^{3,2} (\tor; \mathbb{R}^3 )}^p \bigg] < \infty \quad \mbox{for all}\quad 1 \leq p < \infty;
\]
\item There holds $\p$-a.s.

\begin{equation} \label{Form}
\begin{split}
\Div \vc{v} &= 0, \\
\vc{v} (t \wedge \mathfrak{t})  &= \vc{v} (0) - \int_0^{t \wedge \mathfrak{t}} {\vc{P}_H} \left[ \vc{v} \cdot \Grad \vc{v} \right] \dt  +
\int_0^{t \wedge \mathfrak{t}} \vc{P}_H \left[ {\tn{G}}(1,\vc{v} ) \right] \, \Dif W,
\end{split}
\end{equation}
 a.e. in $(0,T)\times\tor$.
Here $\vc{P}_H$ denotes the standard Helmholtz projection onto the space of solenoidal functions.
\end{itemize}
\end{Definition}

The existence of local-in-time strong solutions to the stochastic Euler system was established by Glatt-Holtz and Vicol \cite[Theorem 4.3]{GHVic}
under certain restrictions imposed on the forcing coefficients $\tn{G}$. Here, we assume a very simple form of $\tn{G}$, namely that it is an affine function of the momentum
\begin{equation} \label{Diff}
\tn{G}(1, \vc{v})  = \tn{F} + \vc{v} \tn{H},\ \mbox{where} \ \tn{F} = \{ H_k \}_{k \geq 1},\ \tn{H} = \{ H_k \}_{k \geq 1},
\end{equation}
where $F_k$, $H_k$ are real numbers such that $\sum_{k\geq1}|F_k|<\infty$ and $\sum_{k\geq1}|H_k|<\infty$. The advantage of such a choice is that the pressure $\Pi$ can be computed explicitly from \ref{Form}.
Indeed seeing that
\[
\vc{P}_H \left[ {\tn{G}}(1,\vc{v} ) \right] =  {\tn{G}}(1,\vc{v} ),
\]
we get
\begin{equation} \label{presE}
\Grad \Pi = - \vc{P}^\perp_H [\vc{v} \cdot \Grad \vc{v}] = - \Grad \Delta^{-1} \Div (\vc{v} \otimes \vc{v}).
\end{equation}
Accordingly, the second equation in (\ref{Form}) reads
\begin{equation} \label{momEu}
\vc{v} (t \wedge \mathfrak{t})  = \vc{v} (0) - \int_0^{t \wedge \mathfrak{t}}  \left[ \vc{v} \cdot \Grad \vc{v} \right] \dt -
 \int_0^{t \wedge \mathfrak{t}} \Grad \Pi \ \dt +
\int_0^{t \wedge \mathfrak{t}}  {\tn{G}}(1,\vc{v} )  \,\Dif W.
\end{equation}

\subsection{Relative energy inequality}

Now, we are ready to apply the relative entropy inequality. Suppose that
$\vc{v}$, with a stopping time $\mathfrak{t}$ is a local strong solution of the Euler system (\ref{Eeq1}), (\ref{Eeq2}).
For each $M > 0$ let
\[
\tau_M = \inf_{t \in [0,T]} \left\{ \| \Grad \vc{v}(t, \cdot) \|_{L^\infty(\tor, R^3)} > M \right\}
\]
be another stopping time. In view of the existence result \cite[Theorem 4.3]{GHVic} we may assume, without loss of generality, that
\[
\tau_M \leq \mathfrak{j}.
\]

With the ansatz of test functions $r = 1$, $\vc{U}(t) = \vc{v}(t \wedge \tau_M) $,
\[
\mathcal{E} \left( \vr, \vu \ \Big| \ 1 , \vc{v} \right) \equiv \intTor{ \left[ \frac{1}{2} \varrho | \vu - {\bf v} |^2 + \frac{1}{\ep^2} \left( H(\varrho) -
H'(1) (\varrho - 1) - H(1) \right)  \right] }
\]
the relative energy inequality reads
\begin{equation} \label{REN1}
\begin{split}
\mathcal{E} \left( \vr, \vu \ \Big| \ 1 , \vc{v} \right) (\tau \wedge \tau_M)
&+ \int_0^{\tau \wedge \tau_M}  \intTor{ \Big( \tn{S}_\ep (\Grad \vc{v}) - \tn{S}_\ep (\Grad \vu) \Big): \Big( \Grad \vc{v} - \Grad \vu \Big) } \ \dt  \\
&\leq \mathcal{E} \left( \vr, \vu \ \Big| \ 1 , \vc{v} \right)(0) + M_R(\tau \wedge \tau_M) - M_R(0)\\
& - \int_0^{\tau \wedge \tau_M}  \intTor{ \vr (\vu - \vc{v}) \cdot \Grad \vc{v} \cdot (\vu -  \vc{v}) } \ \dt\\
&+ \int_0^{\tau \wedge \tau_M}  \intTor{ \tn{S}_\ep(\Grad \vc{v}):(\Grad \vc{v}-\Grad \vu)}\ \dt\\& - \int_0^{\tau \wedge \tau_M} \intTor{ \varrho\Grad \Pi \cdot (\vc{v} -\vu) }\ \dt \\
&+ \frac{1}{2}\sum_{k\geq1}\int_0^{\tau \wedge \tau_M} \intTor{ \varrho \Big|\frac{1}{{\varrho}}  {\vc{G}_k}(\varrho,\varrho \vu) - \vc{G}_k(1, \vc{v}) \Big|^2 }\ \dt.
\end{split}
\end{equation}

We show that, similarly to the proof of Theorem \ref{thm:uniq}, the terms of the right-hand side of (\ref{REN1}) can be ``absorbed'' by means of
a Gronwall type argument. To see this, we first observe that
\begin{equation} \label{Step1}
\begin{split}
\left| \int_0^{\tau \wedge \tau_M}  \intTor{ \vr (\vu - \vc{v}) \cdot \Grad \vc{v} \cdot (\vu -  \vc{v}) } \ \dt \right|
&\leq c \sup_{t \in [0,\tau_M]} \| \Grad \vc{v} \|_{L^\infty(\tor, R^3)} \int_0^{\tau \wedge \tau_M}
\mathcal{E} \left( \vr, \vu \ \Big| \ 1 , \vc{v} \right) \ \dt \\
& \leq c M \int_0^{\tau \wedge \tau_M}
\mathcal{E} \left( \vr, \vu \ \Big| \ 1 , \vc{v} \right) \ \dt.
\end{split}
\end{equation}

Similarly,
\begin{align}
&\left| \int_0^{\tau \wedge \tau_M}  \intTor{ \tn{S}_\ep(\Grad \vc{v}):(\Grad \vc{v}-\Grad \vu)} \ \dt \right| \nonumber\\ &\leq
\frac{1}{2} \int_0^{\tau \wedge \tau_M} \intTor{ \Big( \tn{S}_\ep (\Grad \vc{v}) - \tn{S}_\ep (\Grad \vu) \Big): \Big( \Grad \vc{v} - \Grad \vu \Big) } \ \dt + c \int_0^{\tau \wedge \tau_M} \intTor{ \left| \mathbb{S}(\Grad \vc{v}) \right|^2 } \ \dt\nonumber\\ &\leq
\frac{1}{2} \int_0^{\tau \wedge \tau_M} \intTor{ \Big( \tn{S}_\ep (\Grad \vc{v}) - \tn{S}_\ep (\Grad \vu) \Big): \Big( \Grad \vc{v} - \Grad \vu \Big) } \ \dt + (\mu_\ep + \eta_\ep) c T M^2;\label{Step2}
\end{align}
whence (\ref{REN1}) reduces to
\begin{equation} \label{REN2}
\begin{split}
\mathcal{E} \left( \vr, \vu \ \Big| \ 1 , \vc{v} \right) (\tau \wedge \tau_M)
&+ \frac{1}{2} \int_0^{\tau \wedge \tau_M}  \intTor{ \Big( \tn{S}_\ep (\Grad \vc{v}) - \tn{S}_\ep (\Grad \vu) \Big): \Big( \Grad \vc{v} - \Grad \vu \Big) } \ \dt  \\
&\leq \mathcal{E} \left( \vr, \vu \ \Big| \ 1 , \vc{v} \right)(0) + M_R(\tau \wedge \tau_M) - M_R(0)\\
& + c M \int_0^{\tau \wedge \tau_M}
\mathcal{E} \left( \vr, \vu \ \Big| \ 1 , \vc{v} \right) \ \dt + (\mu_\ep + \eta_\ep) c T M^2 \\
& - \int_0^{\tau \wedge \tau_M} \intTor{ \varrho\Grad \Pi \cdot (\vc{v} -\vu) }\ \dt \\
&+ \frac{1}{2}\sum_{k\geq1}\int_0^{\tau \wedge \tau_M} \intTor{ \varrho \Big|\frac{1}{{\varrho}}  {\vc{G}_k}(\varrho,\varrho \vu) - \vc{G}_k(1, \vc{v}) \Big|^2 }\ \dt
\end{split}
\end{equation}

Next, the integral containing the pressure can be written as
\[
\begin{split}
\int_0^{\tau \wedge \tau_M} \intTor{ \varrho\Grad \Pi \cdot (\vc{v} -\vu) }\ \dt &=
\int_0^{\tau \wedge \tau_M} \intTor{ \varrho\Grad \Pi \cdot \vc{v} } \ \dt \\
& - \int_0^{\tau \wedge \tau_M} \intTor{ \varrho\Grad \Pi \cdot \vu }\ \dt \\
& = \ep \int_0^{\tau \wedge \tau_M} \intTor{ \frac{ \varrho - 1}{\ep} \Grad \Pi \cdot \vc{v} } \ \dt
- \int_0^{\tau \wedge \tau_M} \intTor{ \varrho\Grad \Pi \cdot \vu }\ \dt.
\end{split}
\]

Finally, we handle the integral
\[
\sum_{k\geq1}\intTor{ \varrho \Big|\frac{1}{{\varrho}}  {\vc{G}_k}(\varrho,\varrho \vu) - \vc{G}_k(1, \vc{v}) \Big|^2 }.
\]
Motivated by the specific form of $\tn{G}(1, \vc{v})$ introduced in (\ref{Diff}), we restrict ourselves to
\[
\tn{G} (\vr, \vr \vu) = \vr \tn{F} + \vr \vu \tn{H};
\]
whence
\begin{align*}
\sum_{k\geq1}& \intTor{ \varrho \Big|\frac{1}{{\varrho}}  {\vc{G}_k}(\varrho,\varrho \vu) - \vc{G}_k(1, \vc{v}) \Big|^2 }  \\&=\sum_{k\geq1}
\intTor{ \varrho \left| (\vu - \vc{v}) H_k \right|^2 } \leq \,c\, \mathcal{E} \left( \vr, \vu \ \Big| \ 1 , \vc{v} \right)
\end{align*}
using $\sum_{k\geq1}|H_k|^2$.
Consequently, the relation (\ref{REN2}) gives rise to
\begin{align} \label{REN3}
\expe{\mathcal{E} \left( \vr, \vu \ \Big| \ 1 , \vc{v} \right) (\tau \wedge \tau_M)} &\leq c(M,T) \left(
\expe{\mathcal{E} \left(  \vr, \vu \ \Big| \ 1 , \vc{v} \right) (0)}  + \mu_\ep + \eta_\ep \right)\\
&+\ep \expe{ \int_0^{\tau \wedge \tau_M} \intTor{ \frac{ \varrho - 1}{\ep} \Grad \Pi \cdot \vc{v} } \ \dt }
- \expe{ \int_0^{\tau \wedge \tau_M} \intTor{ \varrho\Grad \Pi \cdot \vu }\ \dt }.\nonumber
\end{align}
In order to control the last two terms in (\ref{REN3}), we evoke again (\ref{REI2}), this time for $r = 1$, $\vc{U} = 0$ obtaining
\[
\begin{split}
&\expe{ \intTor{ \left[ \frac{1}{2} \vr |\vu|^2 + \frac{1}{\ep^2} \left( H(\vr) - H'(1)(\vr - 1) - H(1) \right) \right] }(\tau \wedge \tau_M) }\\
&\leq
\expe{ \intTor{ \left[ \frac{1}{2} \vr |\vu|^2 +  \frac{1}{\ep^2} \left( H(\vr) - H'(1)(\vr - 1) - H(1) \right) \right] }(0) }.
\end{split}
\]
Thus, if the right-hand side of the above inequality is bounded uniformly for $\ep \to 0$, we deduce from (\ref{presE}) that
\[
\left| \expe{ \int_0^{\tau \wedge \tau_M} \intTor{ \frac{ \varrho - 1}{\ep} \Grad \Pi \cdot \vc{v} } \ \dt } \right| \leq c
\]
uniformly for $\ep \to 0$, and
\[
\vr_\ep \vu_\ep \to \vc{v} \ \mbox{weakly in}\ L^{\frac{2 \gamma}{\gamma +1} }((0,T) \times \tor \times \Omega).
\]
In particular, the last two terms on the right-hand side of (\ref{REN3}) vanish for $\ep \to 0$.

We have proved the following result.

\begin{Theorem} \label{thm:sing}
Let $\tn{G}$ be given as
\[
\tn{G}(\vr, \vr \vu) = \vr \tn{F} + \vr \vu \tn{H},\quad \sum_{k \geq 1} \left( |F_k| + |H_k| \right) < \infty.
\]
Let $\StoB$ be a stochastic basis with a complete right-continuous filtration. Let the initial data $\vr_{0,\ep}$, $(\vr \vu)_{0,\ep}$ be given such that
\[
\vr_{0, \ep}, (\vr \vu)_{0,\ep}  \in L^\gamma(\tor) \times L^{\frac{2 \gamma}{\gamma + 1}}(\tor;
\mathbb{R}^3) \ \Big| \ \vr_{0,\ep} \geq \underline \vr > 0,\ \frac{|\vr_{0,\ep} - 1|}{\ep} \leq \delta(\ep),\
|(\vr \vu)_{0,\ep} - \vc{v}_0 |
\leq \delta (\ep) \ \mathbb{P}-\mbox{a.s.}
\]
where
\[
\delta (\ep) \to 0 \ \mbox{as}\ \ep \to 0,
\]
and where
$\vc{v}_0$ is an $\mathfrak{F}_0$-measurable random variable,
\begin{align*}
&\vc{v}_0 \in W^{3,2}(\tor;R^3), \ \Div \vc{v}_0 = 0 \quad \text{$\mathbb{P}$-a.s.},\\
&\expe{ \| \vc{v}_0 \|_{W^{3,2}(\tor; R^3)}^p } < \infty \quad \mbox{for all}\quad 1 \leq p < \infty.
\end{align*}

Then the scaled Navier-Stokes system (\ref{seq1}--\ref{seq3}) with
\[
\mu_\ep > 0,\ \eta_\ep \geq 0,\ \mu_\ep \to 0,\ \eta_\ep \to 0 \ \mbox{as}\ \ep \to 0,
\]
admits a family of (weak) dissipative martingale solutions
\[
\left[ (\tilde {\Omega}, \tilde {\mathfrak{F} }, \left\{ \tilde{ \mathfrak{F}}_t \right\}_{t \geq 0},
\tilde{ \mathbb{P}}, \vre, \vre \vue, W
\right]_{\ep > 0}
\]
defined $(0,T) \times \tor$ and with the initial law
\[
\Lambda_\ep = \mathbb{P} \left[ \vr_{0,\ep}, (\vr \vu)_{0,\ep} \right]^{-1},
\]
such that
\begin{equation} \label{final}
\sup_{t \in [0,T]} \expe{ \intTor{ \left[ \frac{1}{2} \vre | \vue - \vc{v} |^2 + \frac{1}{\ep} \left( H(\vre) - H'(1) (\vre - 1) - H(1) \right) \right] } (t \wedge \mathfrak{t}) }
\to 0
\end{equation}
as $\ep \to 0$, where
$\vc{v}$, with a positive stopping time $\mathfrak{t}$, is a local regular solution of the
Euler system (\ref{Eeq1}), (\ref{Eeq2}), with the initial velocity $\vc{v}(0, \cdot)$ satisfying
\[
\tilde{\mathbb{P} }\left[ \vc{v}(0, \cdot) \right]^{-1} =
\mathbb{P} \left[ \vc{v}_0 \right]^{-1}.
\]

\end{Theorem}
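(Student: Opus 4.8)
The plan is to realize both systems on one common stochastic basis and then compare them through the relative energy inequality of Theorem~\ref{thREI}. First I would invoke Theorem~\ref{thm:exist} to produce, for each $\ep>0$, a dissipative martingale solution of the scaled system (\ref{seq1}--\ref{seq3}) with initial law $\Lambda_\ep$; by the representation theorem of Jakubowski \cite{Jakub} and the construction of \cite{BrHo}, the stochastic basis $\StoB$ and the Wiener process $W$ may be taken independent of $\ep$. Simultaneously, the local existence result of Glatt--Holtz and Vicol \cite{GHVic} supplies a strong solution $\vc{v}$ of the Euler system (\ref{Eeq1}), (\ref{Eeq2}) with a positive stopping time $\mathfrak{t}$ on the same basis, and the affine structure (\ref{Diff}) of $\tn{G}$ lets the pressure be computed explicitly via (\ref{presE}).

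The heart of the argument is to feed the test functions $r=1$, $\vc{U}(t)=\vc{v}(t\wedge\tau_M)$ into the relative energy inequality, where $\tau_M$ controls $\| \Grad \vc{v}\|_{L^\infty}$ and may be taken below $\mathfrak{t}$. This produces (\ref{REN1}), whose remainder splits into a convective term, a viscous term, a pressure term and the stochastic correction. I would then estimate each piece so that it can be absorbed by Gronwall's lemma: the convective term is bounded by $c M\,\mathcal{E}\left( \vr,\vu \,\big|\, 1,\vc{v}\right)$ as in (\ref{Step1}); the viscous term is handled by Young's inequality, half of it being swallowed by the dissipation on the left and the remainder being of order $(\mu_\ep+\eta_\ep) M^2$ as in (\ref{Step2}); and, exploiting the affine form $\tn{G}(\vr,\vr\vu)=\vr\tn{F}+\vr\vu\tn{H}$, the stochastic correction collapses to $\sum_{k\geq1}\intTor{ \vr |(\vu-\vc{v}) H_k|^2 }\leq c\,\mathcal{E}\left( \vr,\vu\,\big|\,1,\vc{v}\right)$. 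Taking expectations (so that the martingale increment of $M_R$ drops out) and applying Gronwall then yields (\ref{REN3}).

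The delicate point, and the step I expect to be the main obstacle, is to show that the two residual terms in (\ref{REN3}) involving the pressure $\Grad \Pi$ vanish in the limit. To control them I would apply the relative energy inequality a second time, now with $r=1$, $\vc{U}=0$, which furnishes a uniform-in-$\ep$ bound on the scaled energy and hence on $\|(\vr-1)/\ep\|$; together with (\ref{presE}) this renders the first residual term $O(\ep)$ and forces the weak convergence $\vre\vue\to\vc{v}$ in $L^{2\gamma/(\gamma+1)}((0,T)\times\tor\times\Omega)$, which kills the second. Finally, the well-preparedness of the data, $|\vr_{0,\ep}-1|/\ep\leq\delta(\ep)\to0$ and $|(\vr\vu)_{0,\ep}-\vc{v}_0|\leq\delta(\ep)$, guarantees that the initial relative energy $\expe{\mathcal{E}\left( \vr,\vu\,\big|\,1,\vc{v}\right)(0)}$ tends to zero. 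Since $\mu_\ep+\eta_\ep\to0$ as well, every term on the right-hand side of (\ref{REN3}) disappears as $\ep\to0$, which is precisely the assertion (\ref{final}).
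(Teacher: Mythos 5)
Your proposal follows essentially the same route as the paper: common stochastic basis via Jakubowski's representation theorem, the relative energy inequality with $r=1$, $\vc{U}=\vc{v}(t\wedge\tau_M)$ combined with Gronwall's lemma, a second application with $r=1$, $\vc{U}=0$ to control the pressure terms, and the well-prepared data to make the initial relative energy vanish. All the key estimates you outline coincide with those in the paper's argument (\ref{Step1}), (\ref{Step2}), (\ref{REN3}), so the proposal is correct.
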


\begin{Remark}

It follows from (\ref{final}) that
\[
\expe{ \int_0^{T \wedge \mathfrak{t}} \| \vue - \vc{v} \|^2_{L^2(\tor; \mathbb{R}^3)} \ \dt } \to 0 \ \mbox{as} \ \ep \to 0.
\]

\end{Remark}

\begin{Remark}

The situation considered in Theorem \ref{thm:sing} corresponds to the so-called well-prepared data. The ill-prepared data
generating fast frequency acoustic waves will be treated elsewhere.

\end{Remark}

\begin{Remark}
\begin{itemize}
\item Note that the inviscid limit in the purely \emph{incompressible} setting was studied by Glatt-Holtz, \v Sver\' ak, and Vicol \cite{GHSvVi} in the two-dimensional setting.
\item We studied the incompressible limit of the compressible Navier--Stokes
with stochastic forcing in our previous paper \cite{BrFH}.
\end{itemize}
\end{Remark}

\centerline{\bf Acknowledgement}
\noindent{D.B. was partially supported by Edinburgh Mathematical Society.}\\
{The research of E.F. leading to these results has received funding from the European Research Council under the European Union's Seventh Framework Programme (FP7/2007-2013)/ ERC Grant Agreement 320078. The Institute of Mathematics of the Academy of Sciences of the Czech
Republic is supported by RVO:67985840.}\\\

\def\cprime{$'$} \def\ocirc#1{\ifmmode\setbox0=\hbox{$#1$}\dimen0=\ht0
  \advance\dimen0 by1pt\rlap{\hbox to\wd0{\hss\raise\dimen0
  \hbox{\hskip.2em$\scriptscriptstyle\circ$}\hss}}#1\else {\accent"17 #1}\fi}

\end{document}